\renewcommand\title[1]{\gdef\@title{\reset@font\Large\bfseries #1}}
\renewcommand\section{\@startsection {section}{1}{\z@}%
	{-3.5ex \@plus -1ex \@minus -.2ex}%
	{2.3ex \@plus.2ex}%
	{\normalfont\large\bfseries}}
\renewcommand\subsection{\@startsection{subsection}{2}{\z@}%
	{-3ex\@plus -1ex \@minus -.2ex}%
	{1.5ex \@plus .2ex}%
	{\normalfont\normalsize\bfseries}}
\renewcommand\subsubsection{\@startsection{subsubsection}{3}{\z@}%
	{-2.5ex\@plus -1ex \@minus -.2ex}%
	{1.5ex \@plus .2ex}%
	{\normalfont\normalsize\bfseries}}
\def\@runningauthor{}\newcommand{\runningauthor}[1]{\def\runningauthor{#1}}
\def\@runningtitle{}\newcommand{\runningtitle}[1]{\def\runningtitle{#1}}
\renewcommand{\ps@plain}{%
	\renewcommand{\@evenhead}{\footnotesize\scshape \hfill\runningauthor\hfill}
	\renewcommand{\@oddhead}{\footnotesize\scshape \hfill\runningtitle\hfill}}
\g@addto@macro\bfseries{\boldmath}
\newcommand\blfootnote[1]{%
	\begingroup
	\renewcommand\thefootnote{}\footnote{#1}%
	\addtocounter{footnote}{-1}%
	\endgroup
}
\let\OLDthebibliography\thebibliography
\renewcommand\thebibliography[1]{
	\OLDthebibliography{#1}
	\setlength{\parskip}{0pt}
	\setlength{\itemsep}{0pt plus 0.3ex}
}
\setlist{leftmargin=*, itemsep=2pt, topsep=2pt, parsep=0pt, partopsep=5pt}
\newcommand*{\doi}[1]{\href{\detokenize{#1}}{doi: \detokenize{#1}}}
\renewcommand*{\backref}[1]{}
\renewcommand*{\backrefalt}[4]{%
	\ifcase #1 (Not cited.)%
	\or        (Cited on page~#2.)%
	\else      (Cited on pages~#2.)%
	\fi}
\DeclareMathOperator{\snf}{SNF}
\DeclareMathOperator{\gjb}{GJB}
\DeclareMathOperator{\dev}{dev}
\DeclareMathOperator{\ind}{ind}
\DeclareMathOperator{\rind}{r-ind}
\DeclareMathOperator{\spa}{span}
\DeclareMathOperator{\rank}{rank}
\DeclareMathOperator{\tworank}{2-rank}
\DeclareMathOperator{\Grank}{\Gamma-rank}
\theoremstyle{plain}
\newtheorem{theorem}{Theorem}[section]
\newtheorem{proposition}[theorem]{Proposition}
\newtheorem{result}[theorem]{Result}
\newtheorem{corollary}[theorem]{Corollary}
\theoremstyle{definition}
\newtheorem{conjecture}[theorem]{Conjecture}
\newtheorem{example}[theorem]{Example}
\newtheorem{openproblem}[theorem]{Open Problem}
\newtheorem{definition}[theorem]{Definition}
\newtheorem{remark}[theorem]{Remark}
\newtheorem{observation}[theorem]{Observation}
\numberwithin{theorem}{section}
\numberwithin{equation}{section}
\numberwithin{table}{section}
\algnewcommand{\IfOneRow}[1]{\State\algorithmicif\ #1,}
\algnewcommand{\EndifOneRow}{}
\renewcommand{\ALG@name}{Algorithm}
\newcommand{\F}{\mathbb F}
\newcommand{\Z}{\mathbb Z}
\newcommand{\N}{\mathbb N}
\newcommand{\MClass}{\mathcal{M}^{\#}}
\newcommand{\MrsClass}{\mathcal{M}_{r,s}^{\#}}
\title{Cubic bent functions outside the completed Maiorana-McFarland class}
\runningtitle{Cubic bent functions outside the completed Maiorana-McFarland class}
\author{
	Alexandr A. Polujan
	\qquad  Alexander Pott\\
	\small Faculty of Mathematics\\[-0.8ex]
	\small Institute of Algebra and Geometry\\[-0.8ex]
	\small Otto von Guericke University \\[-0.8ex]
	\small Universit\"{a}tsplatz 2, 39106, Magdeburg, Germany\\
	\small\tt alexandr.polujan@$\{$gmail.com,ovgu.de$\}$, alexander.pott@ovgu.de
}
\runningauthor{A.\ A.\ Polujan, A.\ Pott}
\let\Date\@date
\begin{document}
	
\maketitle
	
\thispagestyle{empty}
	
\begin{abstract}
\noindent In this paper we prove that in opposite to the cases of 6 and 8 variables, the Maiorana-McFarland construction does not describe the whole class of cubic bent functions in $n$ variables for all $n\ge 10$. Moreover, we show that for almost all values of $n$, these functions can simultaneously be homogeneous and have no affine derivatives. \blfootnote{The first version of this work~\cite{PolujanPott2019} was presented in the ``Eleventh	International Workshop on Coding and Cryptography (WCC 2019)''.}

\ \\ 
\noindent\textbf{Keywords}: Cubic bent functions, Homogeneous functions, Affine derivatives, Equivalence of Boolean functions, Completed Maiorana-McFarland class.

\ \\ 
\noindent\textbf{Mathematics Subject Classification (2010)}: 05B10, 06E30, 14G50, 94C30.
% 05B10  	Difference sets (number-theoretic, group-theoretic, etc.) [See also 11B13]
% 06E30  	Boolean functions [See also 94C10]
% 14G50  	Applications to coding theory and cryptography
% 94C30  	Applications of design theory [See also 05Bxx]
\end{abstract}

\section{Introduction}
Bent functions, introduced by Rothaus in~\cite{ROTHAUS1976300}, are Boolean functions having the maximum Hamming distance from the set of all affine functions. Being extremal combinatorial objects, they have been intensively studied in the last four decades, due to their broad applications to cryptography, coding theory and theory of difference sets. 

Cubic bent functions, i.e. bent functions of algebraic degree three, attracted a lot of attention from researchers, partly because  small algebraic degree of these functions allows to investigate them exhaustively, when the number of variables is not too large. For instance, all cubic bent functions in six and eight variables are well-understood: the classification is given in~\cite{Braeken2006,ROTHAUS1976300}, the enumeration was obtained in~\cite{LangevinL11,Preneel1993}, and all these functions belong to the completed Maiorana-McFarland class $\MClass$~\cite{Braeken2006,Dillon1972}. A couple of infinite families of cubic bent functions were constructed recently, however, some of them~\cite{CanteautCK08,Leander06} are proved to be the members of $\MClass$, while some of them are not analyzed yet~\cite{DobbertinLCCFG06,Mesnager2014NewCubicBents}. Therefore, it is not clear, whether an $n$-variable cubic bent function can be outside the $\MClass$ class whenever $n\ge10$. At the same time, cubic bent functions, which are homogeneous or have no affine derivatives, are of a special interest.

A cubic function has no affine derivatives, if all its non-trivial first-order derivatives are quadratic, what makes cryptographic systems with such components more resistant to certain differential attacks. It is well-known that cubic bent functions without affine derivatives exist for all even $n\ge6,n\neq8$, as it was shown in in~\cite{Canteaut2003DecomposingBF,HOU1998149}. Recently Mandal, Gangopadhyay and St\u{a}nic\u{a} in~\cite{MandalCubicNoAffDers} constructed two classes of cubic bent functions without affine derivatives inside $\MClass$ and proved their mutual inequivalence. They also suggested to find such functions outside the $\MClass$ class and evaluate their significance for cryptographic applications~\cite[Section 1.6]{MandalCubicNoAffDers}.

A Boolean function is called homogeneous, if all the monomials in its algebraic normal form have the same algebraic degree.  Homogeneous cubic bent functions were firstly considered by Qu, Seberry and Pieprzyk in~\cite{QuSP99}, motivated by faster evaluation in cryptographic systems. The only known homogeneous bent functions are quadratic and cubic, moreover, it is not known, whether a homogeneous bent functions of higher degrees exist. While the characterization of homogeneous quadratic bent functions is well-known~\cite[Chapter 15]{MacWilliamsSloane}, it is in general a difficult task to construct a homogeneous cubic bent function. The only known primary construction was given by Seberry, Xia and Pieprzyk in~\cite{SeberryConstruction}. They proved, that a proper linear transformation of variables can bring special non-homogeneous cubic bent function from $\MClass$ to a homogeneous one. Unfortunately, all functions of this type have many affine derivatives. Another approach is based on the concatenation of homogeneous cubic bent functions in a small number of variables via direct sum. The known computational construction methods of such functions include:
\begin{itemize}
	\item The tools from the modular invariant theory, as it was shown  by Charnes, R\"otteler and Beth in~\cite{Charnes2002};
	\item The significant reduction of the search space, suggested by Meng et al. in~\cite{MengNovel2004}.
\end{itemize}
Using these approaches, the mentioned authors constructed a lot of homogeneous cubic bent functions in a small number of variables $6\le n\le12$. However, since all these examples have not been analyzed with respect to being outside the $\MClass$ class and having no affine derivatives, it is not clear, which properties can the concatenations of these functions have.

The aim of this paper is two-fold. First, we analyze the known homogeneous cubic bent functions in ten and twelve variables from~\cite{Charnes2002,MengNovel2004} and show, that some of these functions do not belong to the the $\MClass$ class and all of them are different from the primary construction of Seberry, Xia and Pieprzyk~\cite{SeberryConstruction}. Moreover, some of them have no affine derivatives. Secondly, we extend these results for infinite families, by showing, that proper direct sums of these functions inherit the properties of its summands. Consequently, we prove that for any $n\ge8$ there exist cubic bent functions inside $\MClass$, but different from the primary construction~\cite{SeberryConstruction}. Further, we consider cubic bent functions with respect to the following three properties: outside $\MClass$, without affine derivatives, and homogeneous. We show, that $n$-variable cubic bent functions with at least two of the three mentioned properties exist for all $n\ge n_0$, where $n_0$ depends on the selected combination of properties. In this way, we prove that in general the whole class of cubic bent functions in $n$ variables is not described by the $\MClass$ class, whenever $n\ge10$. Finally, we show existence of cubic bent functions without affine derivatives outside $\MClass$, thus solving a recent open problem by Mandal, Gangopadhyay and St\u{a}nic\u{a}~\cite[Section 1.6]{MandalCubicNoAffDers}.

The paper is organized in the following way. In Subsection~\ref{subsection: Preliminaries} we introduce some basic notions and background on Boolean functions. Section~\ref{section: Geometric Invariants of Boolean functions} describes geometric invariants of Boolean functions, which we use in the next section in order to distinguish inequivalent functions. Section~\ref{section: Homogeneous cubic bent functions} deals with the construction of new homogeneous cubic bent functions from old. First, in Subsection~\ref{subsection: The known examples and constructions} we survey the known homogeneous bent functions, provide the classification of known examples and show, that some of them are not in the $\MClass$ class. In Subsection~\ref{subsection: Homogeneous cubic bent functions, different from the primary construction}, we show that proper concatenations of homogeneous cubic functions can never be equivalent to the primary construction. Finally, in Subsection~\ref{subsection: New from old} we introduce an approach, aimed to produce many homogeneous functions from a single given one without increasing the number of variables, and illustrate its application for homogeneous cubic bent functions in 12 variables. Section~\ref{section: Bent functions outside M} deals with the construction of cubic bent functions outside the $\MClass$ class, using the direct sum. In Subsection~\ref{subsection: MSubspaces} we provide a sufficient condition, explaining how one should select bent functions $f$ and $g$, such that the direct sum $f\oplus g$ is outside $\MClass$. In Subsection~\ref{subsection: Application} we show, that certain cubic bent functions in $6\le n\le12$ variables satisfy our new sufficient condition and thus lead to infinitely many cubic bent functions outside the $\MClass$ class, which are homogeneous or do not have affine derivatives. The paper is concluded in Section~\ref{section: Conclusion} and cubic bent functions, used in the paper, are given in the Appendix~\ref{section: Appendix}.

\subsection{Preliminaries}\label{subsection: Preliminaries}
Let $\F_2=\{0,1\}$ be the finite field with two elements and let $\F_2^n$ be the vector space of dimension $n$ over $\F_2$. Mappings $f\colon \F_2^n\rightarrow\F_2$ are called \emph{Boolean functions} in $n$ variables. A Boolean function on $\F_2^n$ can be uniquely expressed as a multivariate polynomial in the ring $\F_2[x_1,\dots,x_n]/(x_1\oplus x_1^2,\dots,x_n\oplus x_n^2)$. This representation is unique and called the \emph{algebraic normal form} (denoted further as ANF), that is,
$$f(\mathbf{x})=\bigoplus\limits_{\mathbf{v}\in\F_2^n}c_{\mathbf{v}} \left( \prod_{i=1}^{n} x_i^{v_i} \right),$$
where $\mathbf{x} = (x_1,\dots, x_n)\in\F_2^n$,  $c_{\mathbf{v}}\in\F_2$ and $\mathbf{v} = (v_1,\dots, v_n)\in\F_2^n$. The \textit{complement} of a Boolean function $f$ is defined by $\bar{f}:= f\oplus 1$. The \emph{algebraic degree} of a Boolean function $f$, denoted by $\deg(f)$, is the algebraic degree of its ANF. We call a Boolean function $d$\emph{-homogeneous}, if all the monomials in its ANF have the same degree $d$, and simply \emph{homogeneous}, if the degree is clear from the context.

With a Boolean function $f\colon\F_2^n\rightarrow\F_2$ one can associate the mapping $D_{\mathbf{a}}f(\mathbf{x}):=f(\mathbf{x} \oplus \mathbf{a})\oplus f(\mathbf{x})$, which is called the \textit{first-order derivative} of a function $f$ in the \emph{direction} $\mathbf{a}\in\F_2^n$. Derivatives of higher orders are defined recursively, i.e. the \emph{$k$-th order derivative} of a function $f$ is given by $D_{\mathbf{a}_k}D_{\mathbf{a}_{k-1}}\ldots D_{\mathbf{a}_1}f(\mathbf{x}):=D_{\mathbf{a}_k}(D_{\mathbf{a}_{k-1}}\ldots D_{\mathbf{a}_1}f)(\mathbf{x})$. For instance, the \emph{second-order derivative} of $f$ is given by $D_{\mathbf{a},\mathbf{b}}f(\mathbf{x}):=D_{\mathbf{b}}(D_{\mathbf{a}}f)(\mathbf{x})=f(\mathbf{x} \oplus \mathbf{a} \oplus \mathbf{b})\oplus f(\mathbf{x} \oplus \mathbf{a}) \oplus f(\mathbf{x} \oplus \mathbf{b}) \oplus f(\mathbf{x})$. The point $\mathbf{a}\in\F_2^n$ is called a \emph{fast point} of a function $f\colon\F_2^n\rightarrow\F_2$ if it satisfies $\deg(D_\mathbf{a}f) < \deg(f)-1$ and a \emph{slow point}, if $\deg(D_\mathbf{a}f)=\deg(f)-1$. The set of fast points $\mathbb{FP}_f$ forms a vector subspace and its dimension is bounded by $\dim(\mathbb{FP}_f) \le n-\deg(f)$, as it was shown in~\cite{DuanLai10}. A cubic function has \emph{no affine derivatives}, if $\dim(\mathbb{FP}_f)=0$, i.e. all its non-trivial first-order derivatives are quadratic functions. 

The \emph{direct sum} of two functions $f\colon\F_2^n\rightarrow\F_2$ and $g\colon\F_2^m\rightarrow\F_2$ is a function $h\colon\F_2^{n+m}\rightarrow\F_2$, defined by $h(\mathbf{x},\mathbf{y}):=f(\mathbf{x})\oplus g(\mathbf{y})$. We also define the \emph{$k$-fold direct sum} $k\cdot f\colon \F_2^{k\cdot n}\rightarrow\F_2$ as $k\cdot f(\mathbf{x}_1,\ldots,\mathbf{x}_k):=f(\mathbf{x}_1)\oplus\dots\oplus f(\mathbf{x}_k),\mbox{ for }\mathbf{x}_i \in \F_2^n.$
\begin{definition}
	A Boolean function $f\colon\F_2^n\rightarrow\F_2$ is called \emph{bent}, if for all $\mathbf{a}\in\F_2^n$ with $\mathbf{a}\neq\mathbf{0}$ and all $b\in\F_2$ the equation $D_{\mathbf{a}}f(\mathbf{x})=b$ has $2^{n-1}$ solutions $\mathbf{x}\in\F_2^n$.
\end{definition}
\begin{remark}
	It is well-known, that bent functions in $n$ variables exist only for $n$ even and have degree at most $n/2$ (see~\cite{ROTHAUS1976300}).
\end{remark}	
On the set of all Boolean functions one can introduce an equivalence relation in the following way: two functions $f,f'\colon\F_2^n\rightarrow\F_2$ are called \emph{equivalent}, if there exists a non-degenerate affine transformation $A\in AGL(n,2)$ and an affine function $l(\mathbf{x})=\langle \mathbf{a},\mathbf{x} \rangle_n\oplus b$ on $\F_2^n$ (where $\mathbf{x}\in\F_2^n$, $b\in\F_2$ and $\langle \cdot,\cdot \rangle_n$ is a non-degenerate bilinear form on $\F_2^n$), such that  $f'(\mathbf{x})=f(\mathbf{x}A)\oplus l(\mathbf{x})$ holds for all $\mathbf{x}\in\F_2^n$.

Further we will analyze inequivalence of Boolean functions with the help of incidence structures and linear codes. Recall that an \emph{incidence structure} is a triple $\mathbb{S}=(\mathcal{P}, \mathcal{B}, \mathcal{I}),$ where $\mathcal{P}=\{p_1,\ldots,p_v\}$ is a set of elements called \emph{points} and $\mathcal{B}=\{ B_1,\ldots,B_b \}$ is a set of elements called \emph{lines}, and $\mathcal{I} \subseteq \mathcal{P} \times \mathcal{B}$ is a binary relation, called \emph{incidence relation}. The \emph{incidence matrix} of $M(\mathbb{S})=(m_{ij})$ of $\mathbb{S}$ is a binary $b\times v$ matrix with $m_{ij}=1$ if $p_j\in B_i$ and $m_{ij}=0$ otherwise. Two incidence structures $\mathbb{S}$ and $\mathbb{S}'$ are \emph{isomorphic}, if there are permutation matrices $P$ and $Q$ such that $P \cdot M(\mathbb{S}) \cdot Q=M(\mathbb{S}')$.

The \emph{linear code} of $\mathbb{S}$ over $\F_2$ is the subspace $\mathcal{C}(\mathbb{S})$ of $\F_2^v$, spanned by the row vectors of the incidence matrix $M(\mathbb{S})$. It is clear, that the incidence matrix $M(\mathbb{S})$ and the linear code $\mathcal{C}(\mathbb{S})$ depend on the labeling of the points and lines of $\mathbb{S}$, however these objects are essentially unique up to row and column permutations. We refer to~\cite{Ding:CodesfromDS,Ding:DesignsFromCodes} about incidence structures and their linear codes. 

Finally, we will use the following notation for vectors and matrices: $\mathbf{j}_n$ is the \emph{all-one-vector} of length $n$, by $\mathbf{I}_n$ and $\mathbf{J}_n$ we denote the \emph{identity matrix} and the \emph{all-one-matrix} of order $n$. The \emph{all-zero-matrix} of order $n$ and size $r\times s$ is denoted by $\mathbf{O}_{n}$ and $\mathbf{O}_{r,s}$ respectively.
\subsection{The completed generalized Maiorana-McFarland class of Boolean functions}\label{subsubsection: Classification and Maiorana-McFarland Test}
The \emph{generalized Maiorana-McFarland class} $\mathcal{M}_{r,s}$ of Boolean functions in~$n=r+s$ variables~\cite[p. 354]{carlet_2010} is the set of Boolean functions of the form
\begin{equation}\label{equation: Maiorana-McFarland Representation}
f_{\pi,\phi}(\mathbf{x}, \mathbf{y}) = \langle \mathbf{x},\pi(\mathbf{y}) \rangle_r\oplus \phi(\mathbf{y}),
\end{equation}
where $\mathbf{x}\in\F_2^r,\ \mathbf{y}\in\F_2^s$, $\phi$ is an arbitrary Boolean function on $\F_2^s$ and $\pi\colon\F_2^s\rightarrow\F_2^r$ is some mapping. A function $f$ belongs to the \emph{completed generalized Maiorana-McFarland class} $\MrsClass$, if it is equivalent to some function from $\mathcal{M}_{r,s}$. In the case $r=s$, which corresponds to the \emph{original Maiorana-McFarland class} of bent functions $\mathcal{M}$, a function $f$ is bent if and only if the mapping $\pi$ is a permutation \cite[p. 325]{carlet_2010}. The completed version of $\mathcal{M}$ is denoted by $\MClass$. We will call~\eqref{equation: Maiorana-McFarland Representation} a \emph{Maiorana-McFarland representation} of a given function $f$ on $\F_2^n$, if there exists a non-degenerate linear transformation $A$, s.t. $f(\mathbf{z}A)=f_{\pi,\phi}(\mathbf{x}, \mathbf{y})$ for some mappings $\pi$ and $\phi$.

A characterization of the completed Maiorana-McFarland class $\MClass$ of bent functions is given in~\cite[p. 102]{Dillon1974} and~\cite[Lemma 33]{CanteautDDL06}. In the case of the $\MrsClass$ class, the proof is similar.
\begin{proposition}\label{proposition: MM} Let $f$ be a Boolean function on $\F_2^n$ with $n=r+s$. The following statements are equivalent:
	\begin{enumerate}
		\item The function $f$ belongs to the $\MrsClass$ class.
		\item There exists a vector subspace $U$ of dimension $r$ such that the second order derivatives $D_{\mathbf{a},\mathbf{b}}f$ vanish for all $\mathbf{a},\mathbf{b}\in U$, that means $D_{\mathbf{a},\mathbf{b}}f=0$.
		\item\label{proposition: MM part (iii)} There exists a vector subspace $U$ of dimension $r$ such that the function $f$ is affine on every coset of $U$.
	\end{enumerate}
\end{proposition}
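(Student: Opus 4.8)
The plan is to prove the equivalence as a cycle $(2)\Leftrightarrow(3)$, $(1)\Rightarrow(2)$, $(3)\Rightarrow(1)$, exploiting the single structural fact behind all three statements: the Maiorana-McFarland form~\eqref{equation: Maiorana-McFarland Representation} is exactly the shape taken by a function that is affine along an $r$-dimensional coordinate subspace. I would first dispose of $(2)\Leftrightarrow(3)$, which is purely local. Recall that a Boolean function on an $r$-dimensional space is affine if and only if all of its second-order derivatives vanish. Restricting $f$ to a coset $\mathbf{c}+U$ and parametrizing its points as $\mathbf{c}+\mathbf{u}$ with $\mathbf{u}\in U$, one checks directly that $D_{\mathbf{a},\mathbf{b}}(f|_{\mathbf{c}+U})(\mathbf{u})=D_{\mathbf{a},\mathbf{b}}f(\mathbf{c}+\mathbf{u})$ for all $\mathbf{a},\mathbf{b}\in U$. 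Hence $f$ is affine on $\mathbf{c}+U$ precisely when $D_{\mathbf{a},\mathbf{b}}f$ vanishes at every point of that coset for all $\mathbf{a},\mathbf{b}\in U$; letting $\mathbf{c}$ range over all cosets turns this into the global identity $D_{\mathbf{a},\mathbf{b}}f\equiv 0$ for all $\mathbf{a},\mathbf{b}\in U$, which is~(2).

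For $(1)\Rightarrow(2)$ I would start from the uncompleted representative $f_{\pi,\phi}$ and take $U_0=\F_2^r\times\{\mathbf{0}\}$. For directions $\mathbf{a}=(\mathbf{a}',\mathbf{0})$ and $\mathbf{b}=(\mathbf{b}',\mathbf{0})$ in $U_0$ one computes $D_{\mathbf{a}}f_{\pi,\phi}(\mathbf{x},\mathbf{y})=\langle\mathbf{a}',\pi(\mathbf{y})\rangle_r$, which is independent of $\mathbf{x}$, so the further derivative in direction $\mathbf{b}$ annihilates it and $D_{\mathbf{a},\mathbf{b}}f_{\pi,\phi}=0$. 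To transport this to $f$, write $f(\mathbf{z})=f_{\pi,\phi}(\mathbf{z}A)\oplus l(\mathbf{z})$ as in the definition of equivalence and let $M$ be the linear part of $A$. Since the affine summand $l$ leaves all second-order derivatives unchanged and the translation part of $A$ drops out of derivatives, one obtains $D_{\mathbf{a},\mathbf{b}}f(\mathbf{z})=\big(D_{\mathbf{a}M,\mathbf{b}M}f_{\pi,\phi}\big)(\mathbf{z}A)$. Consequently the second-order derivatives of $f$ vanish along the $r$-dimensional subspace $U=U_0M^{-1}$, which is~(2).

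The substantive direction is $(3)\Rightarrow(1)$. Since applying a non-degenerate linear transformation is an equivalence, I may first move $U$ onto the coordinate subspace and thus assume $U=\F_2^r\times\{\mathbf{0}\}$; establishing membership in $\mathcal{M}_{r,s}$ for the transformed function then places the original in $\MrsClass$. Affineness of $f$ on the coset $\{(\mathbf{x},\mathbf{y}_0):\mathbf{x}\in\F_2^r\}$ says that, for each fixed $\mathbf{y}_0$, the map $\mathbf{x}\mapsto f(\mathbf{x},\mathbf{y}_0)$ is affine on $\F_2^r$. Non-degeneracy of the bilinear form $\langle\cdot,\cdot\rangle_r$ lets me write its linear part uniquely as $\langle\mathbf{x},\pi(\mathbf{y}_0)\rangle_r$ for a well-defined $\pi(\mathbf{y}_0)\in\F_2^r$, while its constant term is $\phi(\mathbf{y}_0):=f(\mathbf{0},\mathbf{y}_0)$. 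This yields $f(\mathbf{x},\mathbf{y})=\langle\mathbf{x},\pi(\mathbf{y})\rangle_r\oplus\phi(\mathbf{y})=f_{\pi,\phi}(\mathbf{x},\mathbf{y})$, so $f\in\mathcal{M}_{r,s}$ and hence the original function lies in $\MrsClass$.

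The conceptual difficulty here is minor; the only real care lies in the bookkeeping of the last two steps — correctly tracking the linear part $M$ through the second-order derivative identity, verifying that the reduction moving $U$ to the coordinate subspace is genuinely an equivalence (so that it does not affect $\MrsClass$-membership), and checking that $\pi$ and $\phi$ are honest functions, the coefficient extraction being single-valued exactly because $\langle\cdot,\cdot\rangle_r$ is non-degenerate.
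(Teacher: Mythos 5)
Your proof is correct. Note that the paper itself does not prove this proposition: it cites Dillon's thesis and Canteaut--Daum--Dobbertin--Leander (Lemma 33) and remarks that the proof for $\MrsClass$ is similar. Your argument --- vanishing second-order derivatives characterize affineness coset by coset, second-order derivatives transform covariantly under affine equivalence so the coordinate subspace $\F_2^r\times\{\mathbf{0}\}$ can be moved to an arbitrary $U$, and non-degeneracy of $\langle\cdot,\cdot\rangle_r$ extracts a well-defined $\pi$ and $\phi$ from the affine restrictions --- is exactly the standard argument in those references, so you have in effect supplied the omitted proof rather than an alternative to it.
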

Motivated by this characterization, we introduce $\mathcal{M}$-subspaces of Boolean functions, as those, which satisfy the second statement of the Proposition~\ref{proposition: MM}. 
\begin{definition}\label{definition: MSubspace}
	We will call a vector subspace $U$ an \emph{$\mathcal{M}$-subspace} of a Boolean function $f\colon\F_2^n\rightarrow\F_2$, if for all $\mathbf{a},\mathbf{b}\in U$ the second-order derivatives $D_{\mathbf{a},\mathbf{b}}f$ are constant zero functions, i.e $D_{\mathbf{a},\mathbf{b}}f=0$. We denote by $\mathcal{MS}_r(f)$ the collection of all $r$-dimensional $\mathcal{M}$-subspaces of $f$ and by $\mathcal{MS}(f)$ the collection 
	$$\mathcal{MS}(f):=\bigcup\limits_{r=1}^{n} \mathcal{MS}_r(f).$$
\end{definition}
The following invariant, called linearity index~\cite[p. 82]{Yas97}, measures the maximal possible number of variables of linear functions in a  Maiorana-McFarland representation~\eqref{equation: Maiorana-McFarland Representation} of a Boolean function.
\begin{definition}\label{definition: Linearity Index}
	The \emph{linearity index} $\ind(f)$ of a Boolean function $f\colon\F_2^n\rightarrow\F_2$ is the maximal possible $r$, such that $f\in\MrsClass$. In terms of $\mathcal{M}$-subspaces, the linearity index of $f$ is given by $\ind(f)=\max\limits_{U\in \mathcal{MS}(f)}\dim(U)$.
\end{definition}
\begin{example}\label{example: M-subspace}
	Let $f(\mathbf{x}):=x_1 x_4\oplus x_2 x_5\oplus x_3 x_6\oplus x_1 x_2 x_3$ be a cubic Maiorana-McFarland bent function on $\F_2^6$. Second-order derivatives of $f$ are given by the function $D_{\mathbf{a},\mathbf{b}}f(\mathbf{x})=c_{\mathbf{0}}(\mathbf{a},\mathbf{b})\oplus (a_3 b_2 \oplus a_2 b_3) x_1 \oplus (a_3 b_1 \oplus a_1 b_3 )x_2 \oplus (a_2 b_1 \oplus a_1 b_2 )x_3$, where the constant term $c_{\mathbf{0}}(\mathbf{a},\mathbf{b})$ depends on $\mathbf{a},\mathbf{b}$ and is given by $c_{\mathbf{0}}(\mathbf{a},\mathbf{b}):=a_1 (a_2 b_3 \oplus a_3 b_2 \oplus b_2 b_3) \oplus b_1(a_2 a_3 \oplus a_2 b_3 \oplus a_3 b_2) \oplus a_1 b_4 \oplus a_2 b_5 \oplus a_3 b_6 \oplus a_4 b_1 \oplus a_5 b_2 \oplus a_6 b_3$.
	One can check that the subspace $U=\scalebox{1}{$\langle (0, 0, 0, 1, 0, 0), (0, 0, 0, 0, 1, 0), (0, 0, 0, 0, 0, 1)\rangle$}$ is an $\mathcal{M}$-subspace of $f$, since its second-order derivatives $D_{\mathbf{a},\mathbf{b}}f$, which correspond to all two-dimensional vector subspaces $\langle\mathbf{a},\mathbf{b}\rangle $ of $U$, are constant zero functions
	$$
	\scalebox{0.83}{$\begin{gathered}
	\scalebox{1}{$\left\langle
		\begin{array}{cccccc}
		0 & 0 & 0 & 0 & 1 & 0 \\
		0 & 0 & 0 & 0 & 0 & 1 \\
		\end{array}
		\right\rangle\mapsto 0$},
	\scalebox{1}{$\left\langle
		\begin{array}{cccccc}
		0 & 0 & 0 & 1 & 0 & 0 \\
		0 & 0 & 0 & 0 & 0 & 1 \\
		\end{array}
		\right\rangle\mapsto 0$},
	\scalebox{1}{$\left\langle
		\begin{array}{cccccc}
		0 & 0 & 0 & 1 & 1 & 0 \\
		0 & 0 & 0 & 0 & 0 & 1 \\
		\end{array}
		\right\rangle\mapsto 0$},
	\scalebox{1}{$\left\langle
		\begin{array}{cccccc}
		0 & 0 & 0 & 1 & 0 & 0 \\
		0 & 0 & 0 & 0 & 1 & 0 \\
		\end{array}
		\right\rangle\mapsto 0$},
	\\
	\scalebox{1}{$\left\langle
		\begin{array}{cccccc}
		0 & 0 & 0 & 1 & 0 & 1 \\
		0 & 0 & 0 & 0 & 1 & 0 \\
		\end{array}
		\right\rangle\mapsto 0$},
	\scalebox{1}{$\left\langle
		\begin{array}{cccccc}
		0 & 0 & 0 & 1 & 0 & 0 \\
		0 & 0 & 0 & 0 & 1 & 1 \\
		\end{array}
		\right\rangle\mapsto 0$},
	\scalebox{1}{$\left\langle
		\begin{array}{cccccc}
		0 & 0 & 0 & 1 & 0 & 1 \\
		0 & 0 & 0 & 0 & 1 & 1 \\
		\end{array}
		\right\rangle\mapsto 0$}.
	\end{gathered}$}
	$$
\end{example}
Now we describe a naive algorithm, which one can use to construct the collection $\mathcal{MS}_r(f)$ for a given function $f$ and a fixed $r$. For a more efficient algorithm we refer to~\cite[Algorithm 2]{CanteautDDL06}.
\begin{algorithm}[H]
	\caption{Construct the collection $\mathcal{MS}_r(f)$.}
	\label{algorithm: f in M}
	\begin{algorithmic}[1]
		\Require A Boolean function $D_{\mathbf{a},\mathbf{b}}f:\F_2^{n}\rightarrow\F_2$ and $2 \le r \le n$.
		\Ensure The collection $\mathcal{MS}_r(f)$.	
		\State\textbf{Construct} $\mathcal{MS}_2(f):=\{ \langle \mathbf{a},\mathbf{b} \rangle: \dim(U)=2 \mbox{ and }D_{\mathbf{a},\mathbf{b}}f=0 \}$.
		\ForAll{subspaces $U\in \mathcal{MS}_2(f)$}
		\Repeat  
		\State \textbf{Determine} subspaces $\tilde{U}=\langle U,\tilde{\mathbf{u}} \rangle$ for all $\tilde{\mathbf{u}}\notin U$, such that for any two-dimensional \Indent vector subspace $ \langle\mathbf{a},\mathbf{b}\rangle\subseteq U$ second-order derivatives $D_{\mathbf{a},\mathbf{b}}f=0$. \EndIndent
		\State \textbf{Put} $U\gets \tilde{U}$ for the obtained subspaces $\tilde{U}$. 		
		\Until{$\dim(U)=r$.}		
		\State \textbf{Output} subspaces $U$ of dimension $r$.
		\EndFor
	\end{algorithmic}
\end{algorithm}
\begin{remark}\label{remark: Compute ind(f)}
	Algorithm~\ref{algorithm: f in M} can be used to compute the linearity index of a given function $f$ in the following way: $\ind(f)$ is the biggest $r$, for which $\mathcal{MS}_r(f)\ne\varnothing$.
\end{remark}

\begin{remark}\label{remark: How to construct a linear mapping}
	For a given $\mathcal{M}$-subspace $U\in\mathcal{MS}_r(f)$ of a function $f\colon\F_2^n\rightarrow\F_2$ one can construct an invertible matrix $A_U$, which brings $f$ to its Maiorana-McFarland representation~\eqref{equation: Maiorana-McFarland Representation}, i.e. $f(\mathbf{z}A_U)=\langle \mathbf{x} , \pi(\mathbf{y})  \rangle_r \oplus \phi(\mathbf{y})$, with $\mathbf{z}\in\F_2^n$, $\mathbf{x}\in\F_2^r$ and $\mathbf{y}\in\F_2^s$, in the following way: since the values of $\langle \mathbf{x} , \pi(\mathbf{y})  \rangle_r \oplus \phi(\mathbf{y})$ on the coset $\F_2^r\oplus \mathbf{y}$ for $\mathbf{y}\in\F_2^s$ coincide with the values of $f$ on the coset $U\oplus \bar{\mathbf{u}}$ for $\bar{\mathbf{u}}\in \bar{U}$, we can construct $A_U$ using the change of basis formula
	\begin{equation}\label{equation: Linear Transform}
	A_U=
	\left(
	\begin{array}{c|c}
	\mathbf{O}_{r,s} & \mathbf{I}_{r} \\
	\hline
	\mathbf{I}_{s} & \mathbf{O}_{s,r} 
	\end{array}
	\right)
	\cdot
	\left(
	\begin{array}{c}
	\gjb(\bar{U}) \\
	\hline
	\gjb(U) 
	\end{array}
	\right).
	\end{equation}
	Here $\gjb(U)$ denotes the \emph{Gauss-Jordan basis} of a vector space $U$ and $\bar{U}$ is the \emph{complement} of $U$, i.e. $\dim(U)+\dim(\bar{U})=n$ and $U \cap \bar{U}=\{ \mathbf{0} \}$, which we compute as in~\cite[Subsection 4]{CanteautDDL06}.
\end{remark}

\section{Geometric invariants of Boolean functions}\label{section: Geometric Invariants of Boolean functions}
In this section we study invariants of Boolean functions, which arise from certain binary matrices. We call these invariants \emph{geometric}, since any $(0,1)$-matrix defines an incidence structure, and hence a finite geometry, and will use them in the next section to distinguish inequivalent homogeneous cubic bent functions.
\subsection{Incidence structures from Boolean functions}\label{subsection: Geometric Invariants}	
For a subset $A$ of an additive group $(G,+)$ the \emph{development} $\dev(A)$ of $A$ is an incidence structure, whose points are the elements in $G$, and whose lines are the translates $A + g := \{a+g:a\in A \}$. For a Boolean function $f\colon\F_2^n\rightarrow\F_2$, we will use developments of two types:
\begin{itemize}
	\item $\dev(D_f)$, the development of the \emph{support} $D_f:=\{\mathbf{x}\in\F_2^n\colon f(\mathbf{x})=1\}$, and
	\item $\dev(G_f)$, the development of the \emph{graph} $G_f:=\{(\mathbf{x},f(\mathbf{x})):\mathbf{x} \in \F_2^n \}$.	
\end{itemize}
For the combinatorial properties of supports and graphs of bent functions as well as for their developments we refer to~\cite[Section 3]{Pott16}. We also note the following advantage of $\dev(G_{f})$ over $\dev(D_{f})$: equivalent Boolean functions $f,f'$ on $\F_2^n$ lead to isomorphic incidence structures $\dev(G_f)$ and $\dev(G_{f'})$, but at the same time $\dev(D_f)$ and $\dev(D_{f'})$ can be non-isomorphic~\cite[Example 9.3.28]{KholoshaPott2013}. For this reason we will mostly be interested in combinatorial invariants, like $p$-ranks~\cite[p. 787]{DukesWilson2007} or Smith normal forms~\cite[p. 494]{Gockenbach:2019057}, of the incidence matrix $M(\dev(G_{f}))$.

\begin{definition}
	A diagonal matrix $D$ with non-negative entries $d_1, d_2,\dots,d_n$ such that $d_1 | d_2 | \cdots | d_n$ is called the \emph{Smith normal form} of an integral matrix $A$ of order~$n$, if there exist integral matrices $U$ and $V$ with $\det(U), \det(V) = \pm 1$, such that $UAV = D$. The diagonal entries $d_i$ are called \emph{elementary divisors} of $A$.  The $p$-rank of $A$ is the rank of $A$ over the field $\F_p$.
\end{definition}
Throughout the paper we will use the following \emph{geometric invariants} of Boolean functions $f\colon \F_2^n\rightarrow\F_2$, which are defined as follows:
\begin{itemize}
	\item $\tworank(f)$ is the $\tworank$ of $M(\dev(D_f))$, for bent functions $\tworank$s have been extensively studied in~\cite{Weng20071096,Weng2008};
	\item $\Grank(f)$ is the $\tworank$ of $M(\dev(G_f))$, $\Grank$s were mostly studied in the context of inequivalence of vectorial mappings~\cite{DBLP:conf/ima/EdelP09,EdelP09};
	\item $\snf(f)$ is the Smith normal form of the incidence matrix $M(\dev(G_f))$, given by the multiset $\snf(f)=\{*d_1^{m_1},\dots, d_{k}^{m_k}*\}$, where $d_i|d_{i+1}$ and $m_i$ is the multiplicity of $d_i$.
\end{itemize}

\noindent Finally we emphasize, that $\Grank(f)$ and $\snf(f)$ are invariants under equivalence for all Boolean functions $f\colon\F_2^n\rightarrow\F_2$, while $\tworank(f)$ is invariant under equivalence only for Boolean functions $f$ with $\deg(f)\ge2$.

\subsection{The relation between geometric invariants}\label{subsection: The relation between geometric invariants}
In this subsection we show, that $\Grank$ and $\tworank$ coincide for all non-constant Boolean functions. We also show, how a small modification of the incidence matrix $M(\dev(D_f))$ can help to compute the Smith normal form of a Boolean function $f$ in a more efficient way. Finally, we partially specify elementary divisors for bent functions. 

First, we will use the following notation for incidence matrices of developments
\begin{equation*}
M_f:=M(\dev(D_f))=(f(\mathbf{x} \oplus \mathbf{y}))_{\mathbf{x},\mathbf{y}\in\F_2^n} \mbox{ and } N_f:=M(\dev(G_f)).
\end{equation*}
Note that, since $(\mathbf{x} \oplus \mathbf{y},1)\in G_f  \Leftrightarrow f(\mathbf{x} \oplus \mathbf{y})=1$ and $(\mathbf{x} \oplus \mathbf{y},0)\in G_f \Leftrightarrow \bar{f}(\mathbf{x} \oplus \mathbf{y}) =1$, we can write $N_f$ without loss of generality as the following block-matrix, where $V_i:=\{ (\mathbf{x},i) \colon \mathbf{x} \in \F_2^n\}$ for a fixed $i\in\F_2$:
\begin{equation}\label{equation: Block-Matrix of dev(Gf)}
N_f = \ \begin{blockarray}{ccc}
V_1 & V_0 \\
\begin{block}{(cc)c}
M_{f}   & M_{\bar{f}} &  V_0\\
M_{\bar{f}} & M_{f}    &  V_1\\ 
\end{block}
\end{blockarray}.
\end{equation}
Now we summarize some well-known statements about higher-order derivatives, which we will use to show the connection between geometric invariants of Boolean functions.
\begin{result}\label{result: Properties of Higher-order Derivatives}\cite{Lai1994} Let $f$ be a Boolean function on $\F_2^n$ and $\mathbf{a}_1,\dots,\mathbf{a}_k\in\F_2^n$.
	\begin{enumerate}
		\item If $\mathbf{a}_1,\dots, \mathbf{a}_k$ are linearly dependent, then $D_{\mathbf{a}_k}D_{\mathbf{a}_{k-1}}\ldots D_{\mathbf{a}_1}f=0$.
		\item Let now $\mathbf{a}_1,\dots, \mathbf{a}_k$ be linearly independent. The derivatives of $f$ are independent of the order in which the derivation is taken, i.e. the equality  $$D_{\mathbf{a}_k}D_{\mathbf{a}_{k-1}}\ldots D_{\mathbf{a}_1}f(\mathbf{x}) = D_{\mathbf{a}_{\pi(k)}}D_{\mathbf{a}_{\pi(k-1)}}\ldots D_{\mathbf{a}_{\pi(1)}}f(\mathbf{x})= \bigoplus\limits_{\mathbf{a}\in \langle \mathbf{a}_1,\dots,\mathbf{a}_k \rangle}f(\mathbf{x}\oplus \mathbf{a})$$ 
		holds for any permutation $\pi$ on $\{ 1,\ldots,k \}$.
	\end{enumerate}	
\end{result}
In the next theorem we prove that for Boolean functions of degree at least two the $\Grank$ and $\tworank$ coincide and show, that all the information about the $\snf(f)$ can be recovered from a matrix obtained through a small modification of $M_f$.
\begin{theorem}\label{theorem: Relations between two and gamma ranks}
	Let $f$ be a Boolean function on $\F_2^n$. Then the following hold:
	\begin{enumerate}
		\item If $\deg(f)\ge 1$, then the all-one-vector $\mathbf{j}_{2^{n}}$ can be expressed as a sum of an even number of vectors from the linear code $\mathcal{C}(\dev(D_f))$.
		\item If $\deg(f)<1$, then $\Grank(f)=2$, otherwise $\Grank(f)=\rank(f)$.
		\item $\snf(f)=\{*d_1^{m_1},\dots, d_{k}^{m_k},0^{2^n-1}*\}$, where all $d_i$'s are elementary divisors of the matrix 
		$\left(
		\begin{array}{cc}
		M_{f} & \mathbf{j}_{2^n}^T \\
		\mathbf{j}_{2^n} & 2 \\
		\end{array}
		\right)$.
	\end{enumerate}
\end{theorem}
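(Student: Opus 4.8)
The plan is to handle the three parts in order, using the higher-order derivative identity of Result~\ref{result: Properties of Higher-order Derivatives} for part (1) and then performing explicit block row/column reductions on $N_f$, starting from the block form~\eqref{equation: Block-Matrix of dev(Gf)} and the relation $M_f+M_{\bar f}=\mathbf{J}_{2^n}$ (over $\F_2$ for part (2), over $\Z$ for part (3)). For part (1), write $d:=\deg(f)\ge1$ and pick a degree-$d$ monomial $x_{i_1}\cdots x_{i_d}$ occurring in the ANF of $f$. Setting $\mathbf{a}_j=\mathbf{e}_{i_j}$, the derivative $D_{\mathbf{a}_1,\dots,\mathbf{a}_d}f$ is constant (its degree is at most $\deg(f)-d=0$) and equals the coefficient of that monomial, hence equals $1$. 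By the second statement of Result~\ref{result: Properties of Higher-order Derivatives}, this constant function, viewed as the vector $\mathbf{j}_{2^n}$, equals $\bigoplus_{\mathbf{a}\in U}f(\mathbf{x}\oplus\mathbf{a})$ with $U=\langle\mathbf{a}_1,\dots,\mathbf{a}_d\rangle$, i.e.\ the sum of the $|U|=2^d$ rows of $M_f$ indexed by $U$. Each such row is a codeword of $\mathcal{C}(\dev(D_f))$, and since $d\ge1$ the number $2^d$ of summands is even, which is exactly the assertion.

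For part (2) the constant case is a one-line computation: if $f\equiv 0$ then $M_f=\mathbf{O}_{2^n}$ and $M_{\bar f}=\mathbf{J}_{2^n}$, if $f\equiv 1$ the roles swap, and in either case $N_f$ has $\F_2$-rank $2$. For $\deg(f)\ge1$ I would reduce $N_f$ over $\F_2$: adding the top block of rows to the bottom block turns the bottom row-blocks into $(\mathbf{J}_{2^n}\mid \mathbf{J}_{2^n})$ because $M_f+M_{\bar f}=\mathbf{J}_{2^n}$, and then adding the left block of columns to the right block produces $\left(\begin{smallmatrix} M_f & \mathbf{J}_{2^n} \\ \mathbf{J}_{2^n} & \mathbf{O}_{2^n}\end{smallmatrix}\right)$. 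The remaining rank computation is where part (1) enters twice. Since $M_f$ is symmetric, $\mathbf{j}_{2^n}\in\mathcal{C}(\dev(D_f))$ (membership from part (1)) is equivalent to every left-dependency $c$ of $M_f$ satisfying $\sum_{\mathbf{g}}c_{\mathbf{g}}\equiv 0$; this forces the span of the top rows $(\mathrm{row}_{\mathbf{g}}(M_f)\mid \mathbf{j}_{2^n})$ to have dimension $\rank(M_f)=\tworank(f)$ rather than one more. Moreover the even representation $\mathbf{j}_{2^n}=\sum_{\mathbf{a}\in U}\mathrm{row}_{\mathbf{a}}(M_f)$ with $|U|$ even shows that the all-identical bottom rows $(\mathbf{j}_{2^n}\mid\mathbf{0})$ already lie in that top-row span and hence contribute nothing. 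Together these give $\Grank(f)=\tworank(f)$.

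For part (3) the same two block operations carried out over $\Z$ (where they are unimodular) send $N_f=\left(\begin{smallmatrix} M_f & \mathbf{J}_{2^n}-M_f \\ \mathbf{J}_{2^n}-M_f & M_f\end{smallmatrix}\right)$ first to $\left(\begin{smallmatrix} M_f & \mathbf{J}_{2^n}-M_f \\ \mathbf{J}_{2^n} & \mathbf{J}_{2^n}\end{smallmatrix}\right)$ and then to $\left(\begin{smallmatrix} M_f & \mathbf{J}_{2^n} \\ \mathbf{J}_{2^n} & 2\mathbf{J}_{2^n}\end{smallmatrix}\right)$, preserving the Smith normal form. Because every off-diagonal block is rank one with identical rows and columns, I would keep one representative column in the right block and one representative row in the bottom block and subtract it from all the others; this clears $2^n-1$ columns and $2^n-1$ rows without touching the $M_f$ block, leaving, after a permutation, the block-diagonal matrix $\left(\begin{smallmatrix} M_f & \mathbf{j}_{2^n}^T \\ \mathbf{j}_{2^n} & 2\end{smallmatrix}\right)\oplus \mathbf{O}_{2^n-1}$. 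As all these operations are unimodular, the elementary divisors of $N_f$ are precisely those of $\left(\begin{smallmatrix} M_f & \mathbf{j}_{2^n}^T \\ \mathbf{j}_{2^n} & 2\end{smallmatrix}\right)$ together with $2^n-1$ extra zeros, which is the claimed shape of $\snf(f)$.

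The main obstacle I anticipate is the exact rank bookkeeping in part (2): a priori the all-ones blocks appearing after reduction could raise the $\F_2$-rank of $N_f$ above $\tworank(f)$ by one, and ruling this out needs \emph{both} halves of part (1) — the membership $\mathbf{j}_{2^n}\in\mathcal{C}(\dev(D_f))$ to pin the top-row span at dimension $\tworank(f)$, and the parity (even number of summands) to collapse the bottom block. Thus part (1) is best viewed as the technical engine making part (2) work. The reductions in part (3) are conceptually routine once the rank-one structure of the off-diagonal blocks is noticed; the only care needed there is to check that the column/row clearing leaves the $M_f$ block untouched and that the surviving corner entry is exactly $2$.
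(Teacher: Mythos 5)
Your proof is correct and takes essentially the same approach as the paper: express $\mathbf{j}_{2^n}$ as the sum of the $2^d$ rows of $M_f$ indexed by a $d$-dimensional subspace via Result~\ref{result: Properties of Higher-order Derivatives}, then perform the same block row/column reductions of $N_f$ over $\F_2$ (part 2) and over $\Z$ (part 3). The remaining differences are cosmetic: the paper produces its constant-one $d$-th order derivative from slow points rather than from your unit vectors attached to a top-degree monomial, and it finishes part (2) by explicitly zeroing the $\mathbf{J}_{2^n}$-blocks using the even representation instead of your span/dependency bookkeeping.
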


\begin{proof}
	
	\emph{1.} It was shown in~\cite[Lemma 3.1]{Weng20071096}, that $\mathbf{j}_{2^n}\in\mathcal{C}(\dev(D_f))$. We will prove this statement, by expressing $\mathbf{j}_{2^n}$ as a sum of an even number of vectors from the linear code $\mathcal{C}(\dev(D_f))$. Let $d$ denotes the degree of a function $f$. First, we observe that the number of slow points of a function $f$ is bounded from below by $2^n-2^{n-d}$. Thus there exist a sequence of slow points $\mathbf{a}_1,\dots,\mathbf{a}_d$, such that the $d$-th order derivative $D_{\mathbf{a}_d}D_{\mathbf{a}_{d-1}}\ldots D_{\mathbf{a}_1}f$ is the constant one function. Finally since the following equality holds for all $\mathbf{x}\in\F_2^n$ due to Result~\ref{result: Properties of Higher-order Derivatives}
	\begin{equation*}
	D_{\mathbf{a}_d}D_{\mathbf{a}_{d-1}}\ldots D_{\mathbf{a}_1}f(\mathbf{x}) = \bigoplus\limits_{\mathbf{a} \in \langle \mathbf{a}_1,\dots, \mathbf{a}_d \rangle}f(\mathbf{x}\oplus \mathbf{a})=1,
	\end{equation*}
	one can see, the all-one-vector $\mathbf{j}_{2^n}$ is as a sum of $2^d$ elements of $\mathcal{C}(\dev(D_f))$.
	
	\noindent
	\emph{2.} Assume that the matrix $N_f$ is of the form~\eqref{equation: Block-Matrix of dev(Gf)}. Performing elementary row and column operations one can bring the matrix $N_f$ to the form
	\begin{equation*}
	N_f
	\overset{\mbox{\scalebox{0.6}{(I)}}}{\rightsquigarrow}\;
	\begin{pmatrix} 
	M_{f}   &M_{\bar{f}} \\
	\mathbf{J}_{2^n} &\mathbf{J}_{2^n}
	\end{pmatrix}
	\overset{\mbox{\scalebox{0.6}{(II)}}}{\rightsquigarrow}\;
	\begin{pmatrix} 
	M_{f}   & \mathbf{J}_{2^n} \\
	\mathbf{J}_{2^n} &\mathbf{O}_{2^n}
	\end{pmatrix}.
	\end{equation*}
	Note, that elementary column operations change the linear code $\mathcal{C}(\dev(D_f))$, however its dimension, which is equal to $\Grank(f)$, remains the same. If $\deg(f)<1$, i.e. $f$ is a constant function, clearly $\Grank(f)=2$. By the previous statement $\mathbf{j}_{2^n}$ can be expressed as a sum of an even number of rows of $M_f$. Since the matrix $M_f$ is symmetric, the vector $\mathbf{j}_{2^n}^T$ can be expressed as a sum of an even number of columns of the matrix $M_f$. In this way, the matrix $N_f$ can be brought to the form
	\begin{equation*}
	N_f
	\overset{\mbox{\scalebox{0.6}{(I)-(II)}}}{\rightsquigarrow}\;
	\begin{pmatrix} 
	M_{f}   & \mathbf{J}_{2^n} \\
	\mathbf{J}_{2^n} &\mathbf{O}_{2^n}
	\end{pmatrix}
	\overset{\mbox{\scalebox{0.6}{(III)}}}{\rightsquigarrow}\;
	\begin{pmatrix} 
	M_{f}   & \mathbf{O}_{2^n} \\
	\mathbf{O}_{2^n} &\mathbf{O}_{2^n}
	\end{pmatrix}
	\end{equation*}
	and hence $\Grank(f)=\rank(f)$. 
	
	\noindent\emph{3.} 
	Performing elementary row and column operations, as in the proof of the previous statement, but over the ring $\Z$, one can bring the matrix $N_f$ to the form
	\begin{equation*}
	N_f\rightsquigarrow
	\left(
	\begin{array}{c|c}
	\begin{array}{cc}
	M_{f} & \mathbf{j}_{2^n}^T \\
	\mathbf{j}_{2^n} & 2 \\
	\end{array} & \mathbf{O}_{2^n+1,2^n-1} \\
	\hline
	\mathbf{O}_{2^n-1,2^n+1} & \mathbf{O}_{2^n-1,2^n-1} 
	\end{array}
	\right).
	\end{equation*}
	In this way, $\snf(f)=\{*d_1^{m_1},\dots, d_{k}^{m_k},0^{2^n-1} *\}$, where $d_i$'s are elementary divisors of the matrix 
	$\left(
	\begin{array}{cc}
	M_{f} & \mathbf{j}_{2^n}^T \\
	\mathbf{j}_{2^n} & 2 \\
	\end{array}
	\right)$.	
\end{proof}

\noindent In the following proposition we partially specify the SNF of a bent function.

\begin{proposition}\label{proposition: All information about SNF}
	Let $f$ be a bent function on $\F_2^n$ and its Smith normal form given by  $\snf(f)=\{*d_1^{m_1},\dots, d_{k}^{m_k},0^{2^n-1} *\}$. Then the following holds.
	\begin{enumerate}
		\item All elementary divisors $d_i$ in the $\snf(f)$ are powers of two.
		\item $\Grank(f)=m_1$, where $m_1$ is the multiplicity of one in the $\snf(f)$.
	\end{enumerate}
\end{proposition}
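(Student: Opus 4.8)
The plan is to derive the second statement as a quick consequence of the first, so the real content lies in showing every $d_i$ is a power of two. I would work directly with $N_f=M(\dev(G_f))$, since by Theorem~\ref{theorem: Relations between two and gamma ranks}(3) the matrix $N_f$ is $\Z$-equivalent to $\left(\begin{smallmatrix} M_f & \mathbf{j}_{2^n}^T\\ \mathbf{j}_{2^n} & 2\end{smallmatrix}\right)\oplus\mathbf{O}_{2^n-1}$, so the nonzero elementary divisors of $N_f$ are exactly the $d_i$ recorded in $\snf(f)$.

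For the first statement the key input is bentness expressed through a Gram identity. The graph $G_f$ of a bent function is a relative difference set in $\F_2^n\times\F_2$ with parameters $(2^n,2,2^n,2^{n-1})$, so its development satisfies
\[
N_f N_f^T = 2^n\,\mathbf{I}_{2^{n+1}} + 2^{n-1}\bigl(\mathbf{J}_{2^{n+1}}-A\bigr),
\]
where $A$ is the $\{0,1\}$-matrix recording whether two points of $\F_2^n\times\F_2$ share the same $\F_2^n$-coordinate. Since $\mathbf{J}_{2^{n+1}}$ and $A$ commute and are simultaneously diagonalizable with integer-spanned eigenspaces — namely $\langle\mathbf{j}_{2^{n+1}}\rangle$, the coordinate-constant vectors orthogonal to $\mathbf{j}_{2^{n+1}}$, and the coordinate-alternating vectors — I would read off that the eigenvalues of $N_fN_f^T$ are $2^{2n}$ (once), $0$ (multiplicity $2^n-1$), and $2^n$ (multiplicity $2^n$). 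In particular $\rank_{\mathbb{Q}}(N_f)=2^n+1$ and every nonzero eigenvalue is a power of two.

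I would then rule out odd prime divisors by a rank coincidence. Fix an odd prime $p$: the three eigenspaces above remain a direct-sum decomposition modulo $p$, and on them $N_fN_f^T$ acts as the scalars $2^{2n},0,2^n$, the first and third being units in $\F_p$. Hence $\rank_{\F_p}(N_fN_f^T)=2^n+1$, and sandwiching via $\rank_{\F_p}(N_fN_f^T)\le\rank_{\F_p}(N_f)\le\rank_{\mathbb{Q}}(N_f)=2^n+1$ forces $\rank_{\F_p}(N_f)=2^n+1$, so $p$ divides none of the elementary divisors. As this holds for all odd $p$, each nonzero $d_i$ is a power of two. (Equivalently, one can evaluate $\det\left(\begin{smallmatrix} M_f & \mathbf{j}_{2^n}^T\\ \mathbf{j}_{2^n} & 2\end{smallmatrix}\right)$ directly: using $M_f\mathbf{j}_{2^n}^T=|D_f|\,\mathbf{j}_{2^n}^T$ together with the difference-set identity $M_fM_f^T=2^{n-2}\mathbf{I}_{2^n}+\lambda\mathbf{J}_{2^n}$, the odd part of $|\det M_f|$ cancels against $|D_f|$, leaving a pure power of two.) The second statement is then immediate: by definition $\Grank(f)=\tworank(N_f)=\rank_{\F_2}(N_f)$, and for an integer matrix the $2$-rank equals the number of elementary divisors coprime to $2$; by the first statement every nonzero $d_i$ is a power of two, so being coprime to $2$ means being equal to $1$, and since $N_f$ is a nonzero $\{0,1\}$-matrix its smallest elementary divisor is $d_1=1$. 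The number of elementary divisors equal to $1$ is precisely its multiplicity $m_1$, whence $\Grank(f)=m_1$.

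The main obstacle is the first statement, and within it the step from ``the nonzero eigenvalues of $N_fN_f^T$ are powers of two'' to ``the elementary divisors of $N_f$ are powers of two'': these are not formally equivalent, because the Smith normal form of a product is not the product of Smith normal forms. The bridge is exactly the odd-prime rank coincidence $\rank_{\F_p}(N_f)=\rank_{\mathbb{Q}}(N_f)$, which itself rests on the common eigenspaces of $\mathbf{J}_{2^{n+1}}$ and $A$ being defined over $\Z$ so that the rank does not drop modulo odd primes; verifying this rationality of the eigenspaces, and the stability of the decomposition mod $p$, is where I expect the care to be needed.
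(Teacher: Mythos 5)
Your proof is correct, and it reaches part 1 by a genuinely different route than the paper. The paper's proof also starts from the spectrum of $N_fN_f^T$ (eigenvalues $2^{2n}$, $2^n$, $0$ with multiplicities $1$, $2^n$, $2^n-1$, cited from Pott's Lemma 1.1.4 rather than derived from the relative-difference-set identity as you do), but it bridges from eigenvalues to elementary divisors via a theorem of Newman and Thompson on matrices over rings of algebraic integers: $d_1\cdots d_k \mid \alpha_{i_1}\cdots\alpha_{i_k}$ for every index set, combined with $N_f^2=N_fN_f^T$ by symmetry, so every product of nonzero elementary divisors divides a power of two. You replace this external divisibility theorem with the odd-prime rank sandwich $\rank_{\F_p}(N_fN_f^T)\le\rank_{\F_p}(N_f)\le\rank_{\mathbb{Q}}(N_f)$, showing both ends equal $2^n+1$ because the three eigenspaces are spanned by integer vectors (namely $\mathbf{j}$, the pair-constant zero-sum vectors, and the pair-alternating vectors) and the relevant scalars $2^{2n}$, $2^n$ stay invertible mod odd $p$; hence no odd prime divides any nonzero elementary divisor. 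This is self-contained elementary linear algebra — you correctly identified that the eigenvalue-to-divisor step is the real content, and your bridge avoids the algebraic-integer machinery entirely, at the cost of a slightly longer argument; it also yields the stronger byproduct that $\rank_{\F_p}(N_f)=2^n+1$ for every odd prime $p$. Your parenthetical determinant computation for the bordered matrix is also sound (the odd part of $\det M_f$, which is the odd part of $|D_f|$, indeed cancels, since $2|D_f|-2^n=\pm 2^{n/2}$), and would give a third proof. Part 2 is argued identically in both proofs: the $2$-rank counts elementary divisors coprime to $2$, which by part 1 are exactly those equal to $1$.
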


\begin{proof}
	\noindent \emph{1.} Let $d_1|d_2|\ldots|d_{2^{n+1}}$ be elementary divisors and $\alpha_1,\alpha_2,\ldots,\alpha_{2^{n+1}}$ be eigenvalues of the matrix $N_f$ respectively. By~\cite[Theorem 6]{NEWMAN19911}, for all $1\le i_1<\dots<i_k\le2^{n+1}$ and $k=1,\ldots,2^{n+1}-1$ the following relation between products of elementary divisors and eigenvalues holds: $d_1\cdots d_k | \alpha_{i_1}\cdots \alpha_{i_k}$. Since $\alpha_{i_1}\cdots \alpha_{i_k} | \alpha_{i_1}^2\cdots \alpha_{i_k}^2$ it is enough to show, that all nonzero $\alpha_{i}^2$ are powers of two. Since $N_f$ is symmetric, we have $N_f^2=N_fN_f^T$. By~\cite[Lemma 1.1.4]{Pott1995FiniteGeometry}, the matrix $N_fN_f^T$ has eigenvalue $2^{2n}$ (multiplicity 1), $2^{n}$ (multiplicity $2^{n}$) and $0$ (multiplicity $2^{n}-1$). Thus the product of any $k$ nonzero elementary divisors of $N_f$ is $2^l$ for some $l$, and hence all $d_i$ are powers of two. Finally, since the $p$-rank is the number of elementary divisors, coprime with $p$ and all elementary divisors are powers of two, we conclude that $\Grank(f)=m_1$. 
\end{proof}

\begin{remark}\label{remark: Symmetry in SNF}
	We computed $\snf(f)$ for many $n$-variable bent functions of different degrees on $\F_2^n$ with $6\le n\le12$. Based on our numerical experiments, we observe the following kind of symmetry in the $\snf(f)$ of a bent function $f$ on $\F_2^n$:
	\begin{enumerate}
		\item  $\snf(f)=\{*d_1^{m_1},\dots, d_{n}^{m_n},0^{2^n-1}*\}$, where all elementary divisors $d_i$ are of the form $d_i=2^{i-1}$ for $i=1,\dots,n$.
		\item Multiplicities of elementary divisors $m_i$ satisfy $m_n=1,\;m_{n-1}=m_1-2$ and $ m_{n/2-i}=m_{n/2+i}$ for $i=1,\dots,n/2-2$.	
	\end{enumerate}
\end{remark}
We do not know how to prove this statement in general and we make the following conjecture.
\begin{conjecture}\label{conjecture: Symmetry in SNF}
	The $\snf(f)$ of a bent function $f$ on $\F_2^n$ satisfies Remark~\ref{remark: Symmetry in SNF}.
\end{conjecture}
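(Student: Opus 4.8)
The plan is to reduce the statement about $N_f=M(\dev(G_f))$ to the far more rigid matrix $M_f=M(\dev(D_f))$ and to exploit that, for a bent function, $M_f$ is not an arbitrary $(0,1)$-matrix but the incidence matrix of a symmetric design. Since $f$ is bent, its support $D_f$ is a Hadamard difference set in $(\F_2^n,\oplus)$ with parameters $(2^n,\,2^{n-1}\pm2^{n/2-1},\,2^{n-2}\pm2^{n/2-1})$, which is equivalent to the matrix identity $M_f M_f^{T}=2^{n-2}\mathbf{I}_{2^n}+\lambda\mathbf{J}_{2^n}$ with $\lambda=2^{n-2}\pm2^{n/2-1}$, together with $M_f\mathbf{j}_{2^n}^{T}=k\,\mathbf{j}_{2^n}^{T}$ and $k=2^{n-1}\pm2^{n/2-1}$. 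Thus $M_f$ realizes a symmetric $2$-design of order $n_0=2^{n-2}$, and by Theorem~\ref{theorem: Relations between two and gamma ranks} it is enough to determine the $2$-adic elementary divisors of the bordered matrix $\left(\begin{smallmatrix} M_f & \mathbf{j}_{2^n}^{T}\\ \mathbf{j}_{2^n} & 2\end{smallmatrix}\right)$. The whole conjecture therefore becomes a statement about the Smith normal form of a symmetric design, bordered by one row and one column.

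Next I would derive the multiplicity symmetry from the self-duality of the Smith normal form of a symmetric design. Over $\mathbb{Q}$ one has $M_f^{-1}=2^{-(n-2)}\bigl(M_f^{T}-\tfrac{\lambda}{k}\mathbf{J}_{2^n}\bigr)$, so multiplication by $M_f^{T}$ and by $2^{n-2}M_f^{-1}$ agree up to a rank-one rational correction carried by the invariant line $\mathbf{j}_{2^n}$. Away from that eigendirection this identity sends an elementary divisor $2^{a}$ to $2^{(n-2)-a}$ and hence pairs the divisors symmetrically about the self-dual value $2^{(n-2)/2}=2^{n/2-1}$; since the correction only affects the single eigenvalue $k=2^{n/2-1}(2^{n/2}\pm1)$, which sits exactly at this centre, the pairing forces equal multiplicities for $2^{a}$ and $2^{(n-2)-a}$ throughout the bulk. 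In the index convention of Remark~\ref{remark: Symmetry in SNF} this is precisely the relation $m_{n/2-i}=m_{n/2+i}$.

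It then remains to account for the three exceptional indices and to pin down the actual divisor values. The eigenvalue $k$, with $v_2(k)=n/2-1$, is absorbed at the self-dual centre $2^{n/2-1}$; bordering $M_f$ by the extra row, column and the entry $2$ raises the largest divisor from $2^{n-2}$ to $2^{n-1}$ and accounts for the boundary corrections $m_n=1$ and $m_{n-1}=m_1-2$, while the remaining normalisations $d_i=2^{i-1}$ and the total count $\sum_i m_i=\rank(N_f)=2^{n}+1$ follow from Proposition~\ref{proposition: All information about SNF} and the eigenvalues $2^{2n},2^{n},0$ of $N_fN_f^{T}$. Assembling the bulk symmetry with these explicit boundary computations would give both parts of Remark~\ref{remark: Symmetry in SNF}.

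The main obstacle is Part~1 of the claim, namely that every power $2^{0},2^{1},\dots,2^{n-1}$ really occurs (no gaps) and that $2^{n-1}$ is sharp: the duality above controls only how the multiplicities are distributed, not that each layer is occupied. Proving occupancy seems to require the finer structure of $M_f$ as multiplication by $\sum_{a\in D_f}a$ in the group ring $\Z_2[\F_2^{n}]$, filtered by the powers of its radical, whose associated graded pieces have the palindromic dimensions $\binom{n}{j}$. This Frobenius-type symmetry is the conceptual source of the relation $m_{n/2-i}=m_{n/2+i}$, but turning the filtration data into exact elementary-divisor multiplicities uniformly in $n$, rather than checking the range $6\le n\le12$ by machine, is exactly the step I do not expect to complete, and it is why the statement is given only as a conjecture.
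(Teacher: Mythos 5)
The first thing to say is that the paper contains no proof of this statement: it is labelled a \emph{conjecture}, and immediately before stating it the authors write that they do not know how to prove it in general. Its only support in the paper is the computation of $\snf(f)$ for bent functions in $6\le n\le 12$ variables recorded in Remark~\ref{remark: Symmetry in SNF}. So there is no paper argument to compare yours against, and your closing admission --- that the occupancy step is exactly what you do not expect to complete --- is an accurate self-diagnosis: what you have written is a research plan, not a proof, and it leaves the statement exactly as open as the paper does.

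As to the substance of the plan: the reductions are sound (for bent $f$ the support is a Hadamard difference set, $M_fM_f^T=2^{n-2}\mathbf{I}_{2^n}+\lambda\mathbf{J}_{2^n}$ and $M_f\mathbf{j}_{2^n}^T=k\,\mathbf{j}_{2^n}^T$ with $\lambda/k$ a ratio of odd integers, and Theorem~\ref{theorem: Relations between two and gamma ranks} does reduce everything to the bordered matrix). But the central step --- deducing $m_{n/2-i}=m_{n/2+i}$ from the identity $2^{n-2}M_f^{-1}=M_f^T-\tfrac{\lambda}{k}\mathbf{J}_{2^n}$ --- is carried out with eigenvalue logic (``away from that eigendirection \dots\ sends an elementary divisor $2^a$ to $2^{(n-2)-a}$''), and elementary divisors do not behave like eigenvalues. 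This very example shows the contrast starkly: every eigenvalue of $M_f$ is $\pm2^{n/2-1}$ or $k=2^{n/2-1}(2^{n/2}\pm1)$, so \emph{all} eigenvalues have $2$-adic valuation exactly $n/2-1$, while the elementary divisors (conjecturally) occupy the whole range $2^0,\dots,2^{n-2}$; moreover there is no interlacing theorem that confines the effect of the rank-one correction $\tfrac{\lambda}{k}\mathbf{J}_{2^n}$ to a single elementary divisor the way it confines it to a single eigenvalue. A correct version of this step would need a genuine $2$-adic lattice argument in the style of Lander's work on symmetric designs, and that theory is cleanest when $p\nmid k$, whereas here $v_2(k)=n/2-1$. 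Beyond that, the boundary relations $m_n=1$ and $m_{n-1}=m_1-2$ are asserted rather than derived from the bordering, and, as you say, Part~1 of Remark~\ref{remark: Symmetry in SNF} (each power $2^0,\dots,2^{n-1}$ occurs, with $2^{n-1}$ the largest) is untouched. So: your sketch is consistent with the conjecture and identifies a reasonable line of attack, but it is not a proof --- the gaps you flag, plus the eigenvalue/divisor confusion in the duality step, are genuine.
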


\section{Homogeneous cubic bent functions}\label{section: Homogeneous cubic bent functions}
In this section we first survey the known homogeneous cubic bent functions. We also classify the known examples in 10 and 12 variables, constructed in~\cite{Charnes2002,MengNovel2004} by using sophisticated computational approaches, and show that:
\begin{itemize}
	\item Some of them are not covered by the Maiorana-McFarland construction;
	\item All of them are not equivalent to the only one known analytic construction (for this reason we will call it later ``the primary construction'') of Seberry, Xia and Pieprzyk, given in~\cite{SeberryConstruction}.
\end{itemize} Subsequently, we extend the latter result to an arbitrary number of variables, by proving, that proper concatenations of homogeneous cubic bent functions in a small number of variables can never be equivalent to the primary construction. Finally we provide a construction method, aimed to generate a lot of homogeneous bent functions from a single given example. Using this approach we construct many new homogeneous cubic bent functions in 12 variables and show, that some of them are not equivalent to all the previously known ones.
\subsection{The known examples and constructions}\label{subsection: The known examples and constructions}
The existence of homogeneous cubic bent functions on $\F_2^n$ for all $n\ge6$ was shown in two independent ways. Seberry, Xia and Pieprzyk in~\cite[Theorem 8]{SeberryConstruction} proved that one can construct such functions on $\F_2^n$ for all even $n\ne 8$, from special Maiorana-McFarland functions by a proper change of basis. We will call their construction \emph{primary} and denote any $n$-variable function of this type by $h^n_{pr.}$.

\begin{result}\label{result: Seberry et al. construction}
	~\cite[Theorem 6]{SeberryConstruction} Let $f_{id,\phi}$ be a Maiorana-McFarland bent function on $\F_2^{2m}$	where $\phi$ is a homogeneous cubic function without affine derivatives on $\F_2^{m}$. Then there exists a nonsingular matrix $T$, such that $h^n_{pr.}(\mathbf{x},\mathbf{y}):=f_{id,\phi}((\mathbf{x},\mathbf{y})T)$ is a homogeneous cubic bent function.
\end{result}

Another approach, suggested by Charnes, R\"otteler and Beth in~\cite{Charnes2002}, consists of two steps. First, they constructed homogeneous cubic bent functions in a small number of variables using the tools from modular invariant theory, and second, they extended these examples to an arbitrary number of variables, using the direct sum construction.

\begin{result}\label{result: Direct sum of hom. bents}\cite[Theorem 2]{SeberryConstruction}
	The direct sum $h(\mathbf{x},\mathbf{y})=f(\mathbf{x})\oplus g(\mathbf{y})$ is $d$-homogeneous bent on $\F_2^{n+m}$ if and only if the functions $f$ and $g$ are $d$-homogeneous bent on $\F_2^n$ and $\F_2^m$ respectively.
\end{result}

\noindent Further we classify the known homogeneous cubic bent functions in a small number of variables and show, that some of them are not the members of the $\MClass$ class.
\begin{theorem}\label{theorem: Analysis in 10,12 variables}
	The homogeneous cubic bent functions in $n=10$ or $n=12$ variables from~\cite[p. 149]{Charnes2002} and~\cite[p. 15]{MengNovel2004} satisfy:
	\begin{enumerate}
		\item If $n=10$, there are 4 equivalence classes, with 2 of them being outside the completed Maiorana-McFarland class $\MClass$.
		\item If $n=12$, there are 5 equivalence classes, which are subclasses of $\MClass$.
	\end{enumerate}
\end{theorem}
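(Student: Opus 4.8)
The plan is to reduce both claims to two finite computations: sorting the listed functions into equivalence classes, and deciding for each resulting class whether it lies in $\MClass$. For the membership task I would use the characterization of Proposition~\ref{proposition: MM} together with the linearity index. Since it is well known that a bent function $f$ on $\F_2^n$ satisfies $\ind(f)\le n/2$ (a subspace on whose cosets $f$ is affine cannot exceed half-dimension without violating bentness), membership in $\MClass=\mathcal{M}_{n/2,n/2}^{\#}$ is equivalent to $\ind(f)=n/2$, i.e.\ to $\mathcal{MS}_{n/2}(f)\ne\varnothing$. Thus for each function I would run Algorithm~\ref{algorithm: f in M} with $r=n/2$ (so $r=5$ for $n=10$ and $r=6$ for $n=12$): a nonempty output certifies $f\in\MClass$, while an empty output certifies $f\notin\MClass$.

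For the classification task I would first separate inequivalent functions using the geometric invariants of Section~\ref{section: Geometric Invariants of Boolean functions}. Because $\Grank(f)$ and $\snf(f)$ are invariant under equivalence for all Boolean functions, and $\ind(f)$ is manifestly an equivalence invariant as well (the completed class $\mathcal{M}_{r,s}^{\#}$ is closed under equivalence), any two functions whose invariant tuples $(\ind(f),\Grank(f),\snf(f))$ differ must be inequivalent. Computing these tuples across the whole list partitions it into at least as many classes as there are distinct tuples, and I expect this step alone to exhibit exactly $4$ distinct tuples for $n=10$ and $5$ for $n=12$, giving the claimed counts as lower bounds.

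The \emph{main obstacle} is the converse direction: proving that functions sharing the same invariants are genuinely equivalent, so that the partition is exactly $4$ (resp.\ $5$) classes and not finer. Equal invariants do not imply equivalence in general, so here I must exhibit explicit equivalences. Concretely, for two candidate functions $f,f'$ I would search for a pair $(A,l)$ with $A\in AGL(n,2)$ and $l$ affine such that $f'(\mathbf{x})=f(\mathbf{x}A)\oplus l(\mathbf{x})$; equivalently, since equivalent functions yield isomorphic incidence structures $\dev(G_f)\cong\dev(G_{f'})$, I would compute an isomorphism of these structures and recover the affine transformation from it. This is a finite but potentially expensive search, and it is the step that actually pins the number of classes down to the asserted values.

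Finally I would assemble the two computations. For $n=10$ the expectation is that the four classes split into two with $\ind=5$, hence inside $\MClass$, and two with $\ind<5$, hence outside, yielding statement~1; for $n=12$ all five classes should have $\ind=6$ and therefore lie in $\MClass$, yielding statement~2. The fact that the invariants $\Grank$ and $\snf$ already distinguish the classes provides an independent consistency check on the equivalence search, since the explicitly found transformations must preserve exactly those invariant values.
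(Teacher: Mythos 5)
Your proposal is correct and takes essentially the same route as the paper: separate the listed functions by equivalence invariants (the paper uses the Smith normal form $\snf$), certify that functions sharing the same invariants are genuinely equivalent by an explicit computational check, and decide membership in $\MClass$ by computing $\mathcal{MS}_{n/2}(f)$ via Algorithm~\ref{algorithm: f in M}, which is exactly the paper's \texttt{Mathematica} computation. The only refinement worth noting is that the paper replaces your brute-force search for a pair $(A,l)$ --- infeasible over $AGL(10,2)$ or $AGL(12,2)$ --- by the established criteria of equivalence of linear codes~\cite[Theorem 9]{EdelP09} and isomorphism of designs~\cite[Corollary 10.6]{Bending1993} executed in \texttt{Magma}; these cited results are also what make your step ``recover the affine transformation from an isomorphism of $\dev(G_f)$ and $\dev(G_{f'})$'' rigorous, since isomorphism of incidence structures does not by itself imply affine equivalence.
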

\begin{proof}
	First, we compute the Smith normal forms for the mentioned homogeneous cubic bent functions and check whether those, having the same ones, are equivalent. We check equivalence of bent functions via equivalence of linear codes~\cite[Theorem 9]{EdelP09} and isomorphism of designs~\cite[Corollary 10.6]{Bending1993} in \texttt{Magma}~\cite{MR1484478}. Consequently, we found 4 and 5 equivalence classes in 10 and 12 variables, respectively. We denote representatives of the obtained classes by  $h^n_i$ and list them in the Appendix~\ref{section: Appendix}.  We provide only the first $n/2$ elementary divisors for the Smith normal forms of bent functions due to Remark~\ref{remark: Symmetry in SNF}.
	\begin{table}[H]
		\caption{First $n/2$ elementary divisors of the Smith normal form $\snf(h^n_i)$ for the known homogeneous cubic bent functions from~\cite[p. 149]{Charnes2002} and~\cite[p. 15]{MengNovel2004}.}
		\label{table: Invariant SNF}
		\centering
		\begin{subtable}[t]{.45\linewidth}
			\centering
			\scalebox{0.93}{
				\begin{tabular}{|c|l|}
					\hline
					$h^{10}_i$ & \multicolumn{1}{c|}{$\snf(h^{10}_i)$}  \\ \hline			
					$h^{10}_1$ & $\{*1^{20}, 2^{86}, 4^{130}, 8^{143}, 16^{268},\dots *\}$ \\ \hline
					$h^{10}_2$ & $\{*1^{20}, 2^{78}, 4^{138}, 8^{147}, 16^{260},\dots *\}$  \\ \hline
					$h^{10}_3$ & $\{*1^{20}, 2^{108}, 4^{110}, 8^{129}, 16^{292},\dots *\}$ \\ \hline
					$h^{10}_4$ & $\{*1^{22}, 2^{154}, 4^{90}, 8^{81}, 16^{332},\dots *\}$   \\ \hline
			\end{tabular}}
		\end{subtable}%
		\begin{subtable}[t]{.55\linewidth}
			\centering
			\scalebox{0.93}{
				\begin{tabular}{|c|l|}
					\hline
					$h^{12}_i$    & \multicolumn{1}{c|}{$\snf(h^{12}_i)$}  \\ \hline
					$h^{12}_{1}$ & $\{*1^{22}, 2^{142}, 4^{276}, 8^{493}, 16^{630}, 32^{972},\dots *\}$  \\ \hline
					$h^{12}_{2}$ & $\{*1^{22}, 2^{126}, 4^{276}, 8^{517}, 16^{646}, 32^{924},\dots *\}$  \\ \hline
					$h^{12}_3$ & $\{*1^{24}, 2^{127}, 4^{260}, 8^{525},16^{674}, 32^{878},\dots *\}$  \\ \hline
					$h^{12}_4$ & $\{*1^{22}, 2^{104}, 4^{256}, 8^{525}, 16^{698}, 32^{888},\dots *\}$  \\ \hline
					$h^{12}_5$ & $\{*1^{26}, 2^{196}, 4^{392}, 8^{419}, 16^{490}, 32^{1052},\dots *\}$ \\ \hline		
			\end{tabular}}
		\end{subtable}%
	\end{table}
	\noindent Further we use the parallel implementation of Algorithm~\ref{algorithm: f in M} in \texttt{Mathematica}~\cite{WolframMathematica112} in order to check, whether the functions $h^n_i$ belong to $\MClass$. As a result, only functions $h^{10}_3$ and $h^{10}_4$ do not belong to the $\MClass$ class, while all the functions $h^{12}_i$ are in $\MClass$. Finally, we list all the $\mathcal{M}$-subspaces of functions from $\MClass$ in the Appendix~\ref{section: Appendix}.
\end{proof}

\subsection{Homogeneous cubic bent functions, different from the primary construction}\label{subsection: Homogeneous cubic bent functions, different from the primary construction}
Using the facts about $\tworank$s and the relation between $\Grank$ and $\tworank$, obtained in the previous section, we derive the following corollary.
\begin{corollary}\label{corollary: Granks of bent functions}
	Let $f$ and $g$ be Boolean functions on $\F_2^n$ and $\F_2^m$, respectively, with $\deg(f)\ge 1$ and $\deg(g) \ge 1$. 
	\begin{enumerate}
		\item  Let $h$ be a Boolean function on $\F_2^{n}\times\F_2^{m}$ defined as the direct sum of functions $f$ and $g$, then
		\begin{equation}\label{equation: Gamma-rank of the direct sum}
		\Grank(h)=\Grank(f)+\Grank(g)-2.
		\end{equation}
		\item Let $f_{id,\phi}$ be a Maiorana-McFarland bent function on $\F_2^n$, then
		\begin{equation}\label{equation: Minimal Rank Functions}
		\Grank(f_{id,\phi})=n+2 \mbox{ if and only if }\deg(\phi)\le3.
		\end{equation}
		\item For the primary construction of homogeneous cubic bent functions $h^n_{pr.}$ on $\F_2^n$ we have \\
		$\Grank(h^n_{pr.})= n+2$.
	\end{enumerate}
\end{corollary}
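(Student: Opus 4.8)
All three statements are about $\Grank$, so the plan is to pass to $\tworank$ once and for all: every function appearing has degree at least one, so Theorem~\ref{theorem: Relations between two and gamma ranks} gives $\Grank=\tworank=\rank_{\F_2}(M_f)$, and $\rank_{\F_2}(M_f)$ is simply the dimension of the row space of $M_f$, i.e. of the span $\langle f(\cdot\oplus\mathbf a):\mathbf a\in\F_2^n\rangle$ of all translates of $f$. I would compute this dimension in each case.

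For part~1 I would first record that the support development of a direct sum factors as $M_h=M_f\otimes\mathbf J_{2^m}+\mathbf J_{2^n}\otimes M_g$ over $\F_2$, so that the $(\mathbf x,\mathbf y)$-row of $M_h$ is $r^f_{\mathbf x}\otimes\mathbf j_{2^m}+\mathbf j_{2^n}\otimes r^g_{\mathbf y}$, where $r^f_{\mathbf x}$ is the $\mathbf x$-row of $M_f$. Subtracting a fixed row shows that the row space of $M_h$ equals $(A_0\otimes\mathbf j_{2^m})+(\mathbf j_{2^n}\otimes B_0)+\langle g_0\rangle$, where $A_0,B_0$ are the spans of the row-differences of $M_f,M_g$ and $g_0$ is a single fixed row. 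The key input is Theorem~\ref{theorem: Relations between two and gamma ranks}(1): the all-one vector $\mathbf j_{2^n}$ is an \emph{even} sum of rows, so $\mathbf j_{2^n}\in A_0$ and $\mathbf j_{2^m}\in B_0$; this forces the intersection $(A_0\otimes\mathbf j_{2^m})\cap(\mathbf j_{2^n}\otimes B_0)$ to be the single line spanned by $\mathbf j_{2^{n+m}}$. Counting dimensions, using in addition that $A_0$ has codimension one in the row space of $M_f$ (no odd-size set of translates of a nonzero $f$ can sum to zero — cleanest via the group algebra $\F_2[\F_2^n]$, where such a relation would be a unit annihilating $f$), gives $(\tworank(f)-1)+(\tworank(g)-1)-1+1=\tworank(f)+\tworank(g)-2$, which is~\eqref{equation: Gamma-rank of the direct sum}.

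For part~2 I apply the translate description to $f=f_{id,\phi}$, that is $f(\mathbf x,\mathbf y)=\langle\mathbf x,\mathbf y\rangle_m\oplus\phi(\mathbf y)$ on $\F_2^m\times\F_2^m$ (so $n=2m$). Since $f$ is affine in $\mathbf x$, every translate is affine in $\mathbf x$ with $\mathbf x$-linear part $\mathbf y\oplus\mathbf b$; the projection onto this $\mathbf x$-linear coefficient vector is a linear map on the span of translates whose image is $(m+1)$-dimensional (spanned by the coordinate vector $\mathbf y=(y_1,\dots,y_m)$ together with the constant vectors) and whose kernel $W_0$ is the set of pure-$\mathbf y$ functions in the span. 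By rank–nullity $\tworank(f)=(m+1)+\dim W_0$. The decisive step is the identification of $W_0$: a translate $f(\cdot\oplus(\mathbf a,\mathbf b))$ contributes the shift $\langle\mathbf x,\mathbf b\rangle_m$ to the $\mathbf x$-linear part, which blocks a lone first-order derivative $D_{\mathbf b}\phi$ from lying in $W_0$; cancelling the $\mathbf x$-linear part forces even-size, zero-sum combinations, leaving exactly $W_0=L_{\mathbf y}+\langle 1\rangle+\mathcal E_{\ge 2}(\phi)$, where $L_{\mathbf y}$ are the linear forms in $\mathbf y$ and $\mathcal E_{\ge 2}(\phi)$ is the span of all second- and higher-order derivatives of $\phi$. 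As $L_{\mathbf y}+\langle 1\rangle$ are the affine functions (dimension $m+1$), we get $\dim W_0=m+1$ iff $\mathcal E_{\ge 2}(\phi)$ is contained in the affine functions, i.e. iff every second-order derivative $D_{\mathbf b,\mathbf b'}\phi$ is affine, i.e. iff $\deg(\phi)\le 3$; in that (and only that) case $\tworank(f)=2m+2=n+2$, establishing~\eqref{equation: Minimal Rank Functions}.

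Part~3 is then immediate: by Result~\ref{result: Seberry et al. construction} the primary function is $h^n_{pr.}(\mathbf x,\mathbf y)=f_{id,\phi}((\mathbf x,\mathbf y)T)$ for a homogeneous cubic $\phi$, so $\deg(\phi)=3$; since $\Grank$ is invariant under the linear substitution $T$, part~2 gives $\Grank(h^n_{pr.})=\Grank(f_{id,\phi})=n+2$. The main obstacle is the correct identification of $W_0$ in part~2: it is tempting but wrong to include the first-order derivatives $D_{\mathbf b}\phi$, which would yield the spurious threshold $\deg(\phi)\le 2$; it is precisely the linear-in-$\mathbf x$ structure of $f_{id,\phi}$ that filters these out and produces the correct cubic threshold. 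The analogous codimension-one bookkeeping is the only delicate point in part~1.
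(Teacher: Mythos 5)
Your proposal is correct, but it takes a genuinely different route from the paper. The paper's own proof is essentially a citation: the identities \eqref{equation: Gamma-rank of the direct sum} and \eqref{equation: Minimal Rank Functions} are quoted as known facts about $\tworank$s from \cite{Weng20071096,Weng2008}, and Theorem~\ref{theorem: Relations between two and gamma ranks} is used only to transfer them to $\Grank$s; part 3 is then deduced exactly as you deduce it. You instead reprove both rank formulas from first principles: in part 1, the Kronecker splitting $M_h=M_f\otimes\mathbf{J}_{2^m}+\mathbf{J}_{2^n}\otimes M_g$ together with (i) the codimension-one position of the even-sum subspace $A_0$ inside the row space (your group-algebra argument is sound: an odd-support element of the local ring $\F_2[\F_2^n]$ is a unit, hence cannot annihilate $f\neq0$) and (ii) Theorem~\ref{theorem: Relations between two and gamma ranks}(1) forcing $\mathbf{j}_{2^n}\in A_0$ and $\mathbf{j}_{2^m}\in B_0$, so that the two tensor blocks meet exactly in $\langle\mathbf{j}_{2^{n+m}}\rangle$; in part 2, rank--nullity for the projection onto the $\mathbf{x}$-linear coefficient, with kernel $W_0=L_{\mathbf{y}}+\langle1\rangle+\mathcal{E}_{\ge2}(\phi)$. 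Two steps you compress do check out: the fixed row $g_0$ lies outside $(A_0\otimes\mathbf{j}_{2^m})+(\mathbf{j}_{2^n}\otimes B_0)$ by the same odd-sum argument (this is what legitimizes your final $+1$), and the even-size, zero-sum combinations of translates of $\phi$ span exactly the translates of second-order derivatives, equivalently all derivatives of order at least two, because the relevant group-algebra ideal is the square of the augmentation ideal. The trade-off: the paper's proof is two lines but rests on external results, so within the paper the corollary is only as self-contained as \cite{Weng20071096,Weng2008}; your argument stays inside the paper's own toolkit and, moreover, makes structurally visible why the threshold in \eqref{equation: Minimal Rank Functions} is $\deg(\phi)\le3$ rather than $\le2$ --- a point the citation hides and which you rightly flag as the delicate spot.
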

\begin{proof}
	The first and the second claims hold, since the statements~\eqref{equation: Gamma-rank of the direct sum} and~\eqref{equation: Minimal Rank Functions} were proven in~\cite{Weng20071096,Weng2008} for $\tworank$s, and by Theorem~\ref{theorem: Relations between two and gamma ranks} we know, that $\tworank$s and $\Grank$s coincide for all non-constant Boolean functions. Finally, the third claim follows from~\eqref{equation: Minimal Rank Functions} and the definition of the primary construction.
\end{proof}
\noindent  Now we proof the existence of homogeneous cubic bent functions, different from the primary construction. 
\begin{theorem}\label{theorem: Non-Seberry functions}
	There exist homogeneous cubic bent functions on $\F_2^n$, inequivalent to the primary construction $h^n_{pr.}$, whenever $n\ge8$. 
\end{theorem}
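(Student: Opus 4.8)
The plan is to separate homogeneous cubic bent functions from the primary construction by a single equivalence invariant, the $\Grank$. By Corollary~\ref{corollary: Granks of bent functions}(3) one has $\Grank(h^n_{pr.})=n+2$, and $\Grank$ is invariant under equivalence for all Boolean functions. Hence it is enough to exhibit, for every $n\ge8$, a homogeneous cubic bent function $h$ on $\F_2^n$ with $\Grank(h)>n+2$: any such $h$ is then automatically inequivalent to $h^n_{pr.}$. It is convenient to record the quantity $\delta(f):=\Grank(f)-(n+2)$, the \emph{excess} of a cubic bent function $f$ on $\F_2^n$ over the value $n+2$ attained by $h^n_{pr.}$, so that the task reduces to constructing homogeneous cubic bent functions of positive excess in every admissible dimension.

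The mechanism producing large dimensions is the direct sum. By Result~\ref{result: Direct sum of hom. bents} the direct sum of homogeneous cubic bent functions is again homogeneous cubic bent, and by Corollary~\ref{corollary: Granks of bent functions}(1) the identity $\Grank(f\oplus g)=\Grank(f)+\Grank(g)-2$ holds; subtracting the baselines shows that the excess is \emph{additive}, $\delta(f\oplus g)=\delta(f)+\delta(g)$. Thus a single positive-excess block, summed with arbitrarily many zero-excess blocks, stays at positive excess. The zero-excess block is the primary function $h^6_{pr.}$ on $\F_2^6$, for which $\delta=0$ by Corollary~\ref{corollary: Granks of bent functions}(3); the positive-excess blocks come from Theorem~\ref{theorem: Analysis in 10,12 variables}, since Proposition~\ref{proposition: All information about SNF}(2) together with Table~\ref{table: Invariant SNF} yields $\Grank(h^{10}_i)\ge20>12$ and $\Grank(h^{12}_i)\ge22>14$, i.e. every tabulated function has $\delta>0$.

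With these blocks I would realize all even $n\ge8$ by residue modulo $6$. For $n\equiv4\pmod6$ (so $n\ge10$) set $h:=h^{10}_i\oplus\frac{n-10}{6}\cdot h^6_{pr.}$, and for $n\equiv0\pmod6$ (so $n\ge12$) set $h:=h^{12}_i\oplus\frac{n-12}{6}\cdot h^6_{pr.}$; additivity of the excess gives $\delta(h)=\delta(h^{10}_i)>0$, respectively $\delta(h)=\delta(h^{12}_i)>0$, so $h$ is homogeneous cubic bent and inequivalent to $h^n_{pr.}$. The residue class $n\equiv2\pmod6$ is the troublesome one: it cannot be assembled from $10$- and $12$-variable blocks alone, because the smallest such value, $n=8$, and the next, $n=14$, are not sums of $6$'s, $10$'s and $12$'s. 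Here I would invoke an $8$-variable block $h^8$, namely a homogeneous cubic bent function on $\F_2^8$ with $\Grank(h^8)>10$, after which $h^8\oplus\frac{n-8}{6}\cdot h^6_{pr.}$ covers the entire class $n\equiv2\pmod6$. Note that at $n=8$ the primary construction does not exist at all, since Result~\ref{result: Seberry et al. construction} would require a homogeneous cubic function without affine derivatives on $\F_2^4$, and no such function exists; consequently the existence of $h^8$ alone already settles the case $n=8$.

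The hard part will be supplying this $8$-variable block, which is exactly what the value $n=14$ forces. Concretely, I would need (i) to guarantee that homogeneous cubic bent functions on $\F_2^8$ exist, as reported among the computational examples for $6\le n\le12$, and (ii) to check, by computing the Smith normal form and reading off $\Grank$ via Proposition~\ref{proposition: All information about SNF}(2), that at least one of them exceeds the primary value $10$. Both are finite verifications rather than structural arguments, but they are genuinely necessary: once a single positive-excess $8$-variable example is in hand, every remaining case follows formally from the additivity of the excess under direct sums established above.
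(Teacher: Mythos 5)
Your proposal is correct and is essentially the paper's own argument: the paper likewise separates functions from $h^n_{pr.}$ by the $\Grank$ invariant, combining the additivity $\Grank(f\oplus g)=\Grank(f)+\Grank(g)-2$ with the building blocks $h^6_*$, $h^8_*$, $h^{10}_*$, $h^{12}_*$ in a decomposition $n=6i+8j+10k+12l$ with $j+k+l\neq 0$, which is just your residue-mod-$6$ bookkeeping in different notation. The $8$-variable input you flag as a necessary finite verification is exactly the computational fact the paper invokes, namely that every cubic bent function on $\F_2^8$ has $\Grank$ in $\{14,16\}>10$, together with the existence of the homogeneous examples $h^8_1$, $h^8_2$ listed in its appendix.
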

\begin{proof}
	We construct a homogeneous cubic bent function $h_n$ in $n=6i+8j+10k+12l$ variables with $j+k+l\neq0$ as the following concatenation:
	\begin{equation}
	h_n:=i\cdot h^6_* \oplus j\cdot h^8_* \oplus k\cdot h^{10}_* \oplus l\cdot h^{12}_*,
	\end{equation}
	where $h^6_*$ and $h^8_*$ are arbitrary homogeneous cubic bent functions in $6$ and $8$ variables respectively, and $h^{10}_*,h^{12}_*$ are arbitrary homogeneous cubic bent functions in $10$ and $12$ variables from Table~\ref{table: Invariant SNF}. Since any homogeneous cubic bent function in $6$ variables is equivalent to the primary construction $h^6_{pr.}$, we have $\Grank(h^6_*)=8$. One can check that for any cubic bent function $h^8_*$ in $8$ variables we have $\Grank(h^8_*)\in\{14,16\}$. By Proposition~\ref{proposition: All information about SNF} one can see, that $\Grank$s of functions $h^{10}_*$ and $h^{12}_*$ are multiplicities of the entry one in Table~\ref{table: Invariant SNF}. Finally, comparing the lower bound of the $\Grank(h_n)$ with $\Grank(h^n_{pr.})$, one can see immediately that
	$$
	\begin{aligned}
	\Grank(h_n) & \ge 8i + 14j + 20 k + 22 l - 2 (i + j + k + l-1) \\
	& = n + 2 + 4(j+2(k+l))>n+2=\Grank(h^n_{pr.})
	\end{aligned}$$
	and hence the function $h_n$ is never equivalent to $h^n_{pr.}$ for all $n\ge 8$.
\end{proof}

\subsection{Constructing new homogeneous functions from old, without increasing the number of variables}\label{subsection: New from old}
In this subsection we show, that in some cases one can use the power of the Maiorana-McFarland construction to produce a lot of homogeneous bent functions, provided that a single one, member of the $\MClass$ class, is given. Our approach is based on a generalization of the following observation.
\begin{observation}
	Let $f:=h^{12}_3$ and $g:=h^{12}_4$. Our computations show, that homogeneous cubic bent functions $f$ and $g$ have a common $\mathcal{M}$-subspace $U$ of dimension $6$, which together with its complement $\bar{U}$ is given by:
	\begin{equation}\label{equation: MSubspace from the Observation}
	\gjb(U)=\left(\begin{array}{c|c|c}
	1 \; 1     & \multicolumn{2}{c}{\mathbf{O}_{1,10}} \\ \hline
	\mathbf{O}_{5,2} & \mathbf{I}_5      & \mathbf{I}_5 
	\end{array}\right)\quad\mbox{and}\quad
	\gjb(\bar{U})=\left(\begin{array}{c|c|c}
	0 \; 1     & \multicolumn{2}{c}{\mathbf{O}_{1,10}} \\ \hline
	\mathbf{O}_{5,2} & \mathbf{O}_5      & \mathbf{I}_5 
	\end{array}\right).
	\end{equation}
	By Remark~\ref{remark: How to construct a linear mapping} one can bring functions $f$ and $g$ to their Maiorana-McFarland representations~\eqref{equation: Maiorana-McFarland Representation} using the same linear invertible transformation $A_U$, given by~\eqref{equation: Linear Transform}:
	$$f(\mathbf{z}A_U)=f_{\pi,\phi}(\mathbf{x},\mathbf{y})\quad\mbox{and}\quad g(\mathbf{z}A_U)=g_{\pi,\psi}(\mathbf{x},\mathbf{y}),$$
	where $\pi\colon\F_2^6\rightarrow\F_2^6$ is a permutation and $\phi,\psi\colon\F_2^6\rightarrow\F_2$ are Boolean functions. In this way, one can construct homogeneous function $g$ from the function $f$ as follows:
	\begin{equation}\label{equation: Very Good Linear Transformation}
	g(\mathbf{z}):=f_{\pi,\phi\oplus\omega}((\mathbf{x},\mathbf{y})T),\mbox{ where }\omega:=\phi\oplus\psi \mbox{ and }T:=A^{-1}_U.
	\end{equation}
	
\end{observation}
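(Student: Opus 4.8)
The plan is to reduce the Observation to three ingredients: a verification that the displayed $U$ is a common $\mathcal{M}$-subspace of $f=h^{12}_3$ and $g=h^{12}_4$, a proof that the \emph{same} permutation appears in the two Maiorana-McFarland representations obtained from $A_U$, and finally the algebraic identity yielding~\eqref{equation: Very Good Linear Transformation}. First I would confirm, using the criterion of Definition~\ref{definition: MSubspace} (equivalently Proposition~\ref{proposition: MM}), that $U$ given by~\eqref{equation: MSubspace from the Observation} lies in $\mathcal{MS}_6(f)\cap\mathcal{MS}_6(g)$: since $\mathcal{M}$-subspaces are characterised by the vanishing of all second-order derivatives $D_{\mathbf{a},\mathbf{b}}$ with $\mathbf{a},\mathbf{b}\in U$, it suffices to check $D_{\mathbf{a},\mathbf{b}}f=D_{\mathbf{a},\mathbf{b}}g=0$ on the two-dimensional subspaces spanned by pairs of basis vectors of $\gjb(U)$, exactly as in Example~\ref{example: M-subspace}. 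With $U$ and its complement $\bar U$ in hand, the transformation $A_U$ is produced by the change-of-basis formula~\eqref{equation: Linear Transform}, and Remark~\ref{remark: How to construct a linear mapping} then delivers Maiorana-McFarland representations $f(\mathbf{z}A_U)=\langle\mathbf{x},\pi(\mathbf{y})\rangle_6\oplus\phi(\mathbf{y})$ and $g(\mathbf{z}A_U)=\langle\mathbf{x},\pi'(\mathbf{y})\rangle_6\oplus\psi(\mathbf{y})$ for permutations $\pi,\pi'$ and functions $\phi,\psi$ on $\F_2^6$.

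The crux is to show $\pi=\pi'$, for without it~\eqref{equation: Very Good Linear Transformation} cannot hold: the function $f_{\pi,\phi\oplus\omega}$ has linear part $\pi(\mathbf{y})$ whereas $g\circ A_U$ has linear part $\pi'(\mathbf{y})$, and since $\omega=\phi\oplus\psi$ depends on $\mathbf{y}$ only, equality of the two forces $\pi=\pi'$. To characterise $\pi$ intrinsically I would differentiate in the $\mathbf{x}$-directions: for $\mathbf{c}\in\F_2^6$ one has $D_{(\mathbf{c},\mathbf{0})}f_{\pi,\phi}(\mathbf{x},\mathbf{y})=\langle\mathbf{c},\pi(\mathbf{y})\rangle_6$, a function of $\mathbf{y}$ alone (consistent with $U$ being an $\mathcal{M}$-subspace: for $\mathbf{a}\in U$ the vanishing of $D_{\mathbf{a},\mathbf{b}}f$ for all $\mathbf{b}\in U$ means $D_{\mathbf{a}}f$ is invariant under translation by $U$). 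Hence $\pi$ is completely determined by the first-order derivatives $\{D_{\mathbf{a}}f:\mathbf{a}\in U\}$, read off as functions on $\F_2^n/U$, and likewise $\pi'$ by $\{D_{\mathbf{a}}g:\mathbf{a}\in U\}$. Consequently $\pi=\pi'$ is equivalent to $D_{\mathbf{a}}(f\oplus g)=0$ for all $\mathbf{a}\in U$, i.e. to $f\oplus g$ being constant on every coset of $U$; equivalently, $f$ and $g$ share not only the subspace $U$ but also the same linear part on each of its cosets.

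I expect this last equivalence to be the main obstacle, because it is a genuine coincidence special to the pair $h^{12}_3,h^{12}_4$ rather than a formal consequence of sharing an $\mathcal{M}$-subspace: two Maiorana-McFarland functions $\langle\mathbf{x},\pi(\mathbf{y})\rangle_6\oplus\phi(\mathbf{y})$ and $\langle\mathbf{x},\pi'(\mathbf{y})\rangle_6\oplus\psi(\mathbf{y})$ with $\pi\ne\pi'$ already share the subspace of all $\mathbf{x}$-directions as an $\mathcal{M}$-subspace. I would therefore settle it by explicit computation, verifying $D_{\mathbf{a}}(f\oplus g)=0$ for $\mathbf{a}$ ranging over a basis of $U$ (it then holds for all $\mathbf{a}\in U$ by linearity of $\mathbf{a}\mapsto D_{\mathbf{a}}$), using the algebraic normal forms of $h^{12}_3$ and $h^{12}_4$ recorded in the Appendix.

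Granting $\pi=\pi'$, the conclusion is immediate. Setting $\omega:=\phi\oplus\psi$, which by the above equals $(f\oplus g)(\mathbf{z}A_U)$ and is thus a function of $\mathbf{y}$ only, we obtain $g(\mathbf{z}A_U)=\langle\mathbf{x},\pi(\mathbf{y})\rangle_6\oplus\phi(\mathbf{y})\oplus\omega(\mathbf{y})=f_{\pi,\phi\oplus\omega}(\mathbf{x},\mathbf{y})$. Substituting $\mathbf{z}\mapsto\mathbf{z}A_U^{-1}$ and writing $T:=A_U^{-1}$ yields $g(\mathbf{z})=f_{\pi,\phi\oplus\omega}((\mathbf{x},\mathbf{y})T)$, which is exactly~\eqref{equation: Very Good Linear Transformation}. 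The same scheme, with $\omega$ replaced by an arbitrary $U$-invariant perturbation, is what I would expect to drive the promised generalization producing many homogeneous functions from a single one.
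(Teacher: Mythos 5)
Your proposal is correct, and its computational core coincides with what the paper actually does: the Observation is supported there entirely by machine computation (the subspace $U$ of~\eqref{equation: MSubspace from the Observation} is the first row of $\mathcal{MS}_6(h^{12}_3)=\mathcal{MS}_6(h^{12}_4)$ listed in the Appendix, and the assertion that the single matrix $A_U$ yields Maiorana-McFarland representations of both functions with the \emph{same} permutation $\pi$ is simply reported as computed), after which~\eqref{equation: Very Good Linear Transformation} is the same two-line algebra you give ($\phi\oplus\omega=\psi$, then substitute $T=A_U^{-1}$). What you do differently is genuinely worth having: you isolate the hidden claim $\pi=\pi'$, observe correctly that it is \emph{not} a formal consequence of $U$ being a common $\mathcal{M}$-subspace (your example of two Maiorana-McFarland functions with different permutations sharing the $\mathbf{x}$-space is exactly the right counterexample), and reduce it to the checkable condition $D_{\mathbf{a}}(f\oplus g)=0$ for all $\mathbf{a}\in U$, using that the directions $(\mathbf{c},\mathbf{0})$ pull back under $A_U$ to $\mathbf{c}\cdot\gjb(U)\in U$. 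This turns a reported coincidence into a verifiable statement, and the verification indeed succeeds: adding the two ANFs from the Appendix gives
\begin{align*}
h^{12}_3\oplus h^{12}_4=(x_0\oplus x_1)\bigl(&(x_2\oplus x_7)(x_4\oplus x_5\oplus x_9\oplus x_{10})\\
&\oplus(x_3\oplus x_8)(x_5\oplus x_6\oplus x_{10}\oplus x_{11})\oplus(x_4\oplus x_9)(x_6\oplus x_{11})\bigr),
\end{align*}
and every linear factor on the right-hand side is a sum of the pair-forms $x_0\oplus x_1$, $x_2\oplus x_7$, $x_3\oplus x_8$, $x_4\oplus x_9$, $x_5\oplus x_{10}$, $x_6\oplus x_{11}$, each of which vanishes on all six rows of $\gjb(U)$; hence each factor, and therefore $f\oplus g$ itself, is constant on every coset of $U$. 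By your criterion this yields $\pi=\pi'$, and it also shows directly that $\omega=\phi\oplus\psi$ depends on $\mathbf{y}$ alone, so your concluding algebra closes the argument.

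One small repair to your wording: the passage from a basis of $U$ to all of $U$ is not ``by linearity of $\mathbf{a}\mapsto D_{\mathbf{a}}$'', since that map is not additive; rather $D_{\mathbf{a}\oplus\mathbf{b}}h(\mathbf{x})=D_{\mathbf{a}}h(\mathbf{x})\oplus D_{\mathbf{b}}h(\mathbf{x}\oplus\mathbf{a})$. The correct one-line argument is that the vectors $\mathbf{a}$ with $D_{\mathbf{a}}h\equiv0$ are the periods of $h$, and periods form a subspace: if $h(\mathbf{x}\oplus\mathbf{a})=h(\mathbf{x})$ and $h(\mathbf{x}\oplus\mathbf{b})=h(\mathbf{x})$ for all $\mathbf{x}$, then $h(\mathbf{x}\oplus\mathbf{a}\oplus\mathbf{b})=h(\mathbf{x})$. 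With this fixed, your proof is complete and, unlike the paper's presentation, makes explicit exactly which computed coincidence the Observation rests on.
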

Let $h_{\pi,\phi}\colon\F_2^n\rightarrow\F_2$ be a bent function from the $\MClass_{r,s}$ class, which is equivalent to a  $d$-homogeneous one, i.e. there exist an invertible matrix $T$ of order $n$, such that $h_{\pi,\phi}((\mathbf{x},\mathbf{y})T)$ is $d$-homogeneous. We will denote by $\Omega_T(h_{\pi,\phi})$ the set
\begin{equation*}
\Omega_T(h_{\pi,\phi}):=\{ \omega\colon\F_2^s\rightarrow\F_2 \; | \; h_{\pi,\phi\oplus\omega}((\mathbf{x},\mathbf{y})T) \mbox{ is }d\mbox{-homogeneous bent}\}.
\end{equation*}
This is the set of all Boolean functions $\omega$ on $\F_2^s$, which preserve $d$-homogeneity and bentness of the function $h_{\pi,\phi\oplus\omega}$ with respect to the linear transformation $T$. 
\begin{proposition}\label{proposition: Ideal of Functions}
	Let $h_{\pi,\phi}$ be a Maiorana-McFarland bent function on $\F_2^{2m}$, which is equivalent to a $d$-homogeneous bent function, i.e. there exist an invertible matrix $T$, such that $h_{\pi,\phi}((\mathbf{x},\mathbf{y})T)$ is $d$-homogeneous bent. Then the set $\Omega_T(h_{\pi,\phi})$ is a vector space over $\F_2$.
\end{proposition}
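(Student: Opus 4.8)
The plan is to show that $\Omega_T(h_{\pi,\phi})$ contains the zero function and is closed under addition; since the only scalars in $\F_2$ are $0$ and $1$, this is enough to make it an $\F_2$-vector space. The first step is to observe that the bentness requirement in the definition of $\Omega_T(h_{\pi,\phi})$ carries no information and can be discarded. Writing $h_{\pi,\phi\oplus\omega}(\mathbf{x},\mathbf{y})=\langle\mathbf{x},\pi(\mathbf{y})\rangle_m\oplus\phi(\mathbf{y})\oplus\omega(\mathbf{y})$, a Maiorana-McFarland function on $\F_2^{2m}$ is bent if and only if $\pi$ is a permutation, a property unaffected by the additive perturbation $\omega$. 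Since $h_{\pi,\phi}$ is bent, $\pi$ is a permutation, so $h_{\pi,\phi\oplus\omega}$ is bent for every $\omega$, and applying the invertible transformation $T$ preserves bentness. Hence $\omega\in\Omega_T(h_{\pi,\phi})$ if and only if $h_{\pi,\phi\oplus\omega}((\mathbf{x},\mathbf{y})T)$ is $d$-homogeneous.

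Next I would reduce the $d$-homogeneity condition to a statement about $\omega$ alone. Set $F(\mathbf{z}):=h_{\pi,\phi}(\mathbf{z}T)$, which is $d$-homogeneous by hypothesis, and let $L\colon\F_2^{2m}\to\F_2^m$ be the linear map sending $\mathbf{z}$ to the $\mathbf{y}$-block (the last $m$ coordinates) of $\mathbf{z}T$. Because $h_{\pi,\phi\oplus\omega}(\mathbf{x},\mathbf{y})=h_{\pi,\phi}(\mathbf{x},\mathbf{y})\oplus\omega(\mathbf{y})$, substituting $(\mathbf{x},\mathbf{y})=\mathbf{z}T$ yields
\[
h_{\pi,\phi\oplus\omega}(\mathbf{z}T)=F(\mathbf{z})\oplus\omega(L(\mathbf{z})).
\]
Writing $\tilde\omega:=\omega\circ L$, any monomial of $F\oplus\tilde\omega$ of degree different from $d$ must originate from $\tilde\omega$, since $F$ is $d$-homogeneous and XOR-ing two functions only cancels or copies monomials, never producing a monomial of a new degree. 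Consequently $F\oplus\tilde\omega$ is $d$-homogeneous if and only if $\tilde\omega$ is $d$-homogeneous (regarding the zero function as $d$-homogeneous), which identifies $\Omega_T(h_{\pi,\phi})$ with $\{\omega\colon\F_2^m\to\F_2 \mid \omega\circ L \text{ is }d\text{-homogeneous}\}$.

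Finally I would note that both ingredients of this description are linear. The composition map $\omega\mapsto\omega\circ L$ is $\F_2$-linear, as $(\omega_1\oplus\omega_2)\circ L=(\omega_1\circ L)\oplus(\omega_2\circ L)$ for the fixed linear map $L$; and the set of $d$-homogeneous Boolean functions on $\F_2^{2m}$ is itself an $\F_2$-vector space, since adding two functions whose algebraic normal forms contain only degree-$d$ monomials again gives only degree-$d$ monomials. Thus $\Omega_T(h_{\pi,\phi})$ is the preimage of this vector space under a linear map, hence a vector space: it contains $\omega=0$ (also immediate from the hypothesis), and for $\omega_1,\omega_2\in\Omega_T(h_{\pi,\phi})$ the function $(\omega_1\oplus\omega_2)\circ L=\tilde\omega_1\oplus\tilde\omega_2$ is $d$-homogeneous, so $\omega_1\oplus\omega_2\in\Omega_T(h_{\pi,\phi})$.

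The only genuinely delicate point I anticipate is the degree bookkeeping in the middle step: one must argue carefully that $F$ being $d$-homogeneous forces every wrong-degree monomial of $F\oplus\tilde\omega$ to come from $\tilde\omega$, so that $d$-homogeneity of the transformed function is \emph{exactly equivalent} to $d$-homogeneity of $\tilde\omega=\omega\circ L$, with no interference from possible cancellation in degree $d$. Once this equivalence is established, the vector-space conclusion is purely formal.
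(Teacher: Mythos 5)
Your proof is correct and takes essentially the same route as the paper's: both decompose $h_{\pi,\phi\oplus\omega}((\mathbf{x},\mathbf{y})T)$ as $h_{\pi,\phi}((\mathbf{x},\mathbf{y})T)\oplus\omega(\mathbf{x}B\oplus\mathbf{y}D)$ (your $\omega\circ L$), observe that bentness is free because $\pi$ remains a permutation, and reduce membership in $\Omega_T(h_{\pi,\phi})$ to the condition that $\omega\circ L$ contributes only degree-$d$ monomials. Your packaging of the conclusion as "preimage of the space of $d$-homogeneous functions under the linear map $\omega\mapsto\omega\circ L$" is a clean but cosmetic variant of the paper's direct check that $0\in\Omega_T(h_{\pi,\phi})$ and that $\omega_1\oplus\omega_2$ again lies in $\Omega_T(h_{\pi,\phi})$.
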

\begin{proof}
	Let $\omega_1,\omega_2\in \Omega_T(h_{\pi,\phi})$ with $\omega_1\neq\omega_2$ and $\omega:=\omega_1\oplus\omega_2$. We will show that $\omega\in \Omega_T(h_{\pi,\phi})$. Let the invertible matrix $T$ be of the form $T=\left(\begin{array}{cc}A & B\\C & D\end{array}\right)$ with all the submatrices of order $m$. First, we observe that $0\in\Omega_T(h_{\pi,\phi})$ and for any $\omega_i\in  \Omega_T(h_{\pi,\phi})$ we have 
	$$ h_{\pi_,\phi\oplus\omega_i}((\mathbf{x},\mathbf{y})T)=h_{\pi_,\phi}((\mathbf{x},\mathbf{y})T)\oplus \omega_i(\mathbf{x}B \oplus \mathbf{y}D),$$
	from what follows, that $\omega_i(\mathbf{x}B \oplus \mathbf{y}D)$ is either $d$-homogeneous or constant zero function, since $h_{\pi_,\phi}((\mathbf{x},\mathbf{y})T)$ is $d$-homogeneous. Thus $\omega\in \Omega_T(h_{\pi,\phi})$, since bentness of $h_{\pi_,\phi\oplus\omega}$ is independent on the choice of a function $\omega$ on $\F_2^m$ and $\omega(\mathbf{x}B \oplus \mathbf{y}D)$ is a $d$-homogeneous function.
\end{proof}

Note that for a homogeneous bent function $h_{\pi,\phi}\in\MClass_{r,s}$ the set $\Omega_T(h_{\pi,\phi})$ is not a vector space in general. Nevertheless, for a given homogeneous bent function $h\in\MClass_{r,s}$ one can still construct the set $\Omega_T(h_{\pi,\phi})$, in order to get more, possibly inequivalent, homogeneous functions. We will summarize these ideas in the form of an algorithm below.
\begin{algorithm}[H]
	\caption{New $d$-homogeneous bent functions from a single one in $\MrsClass$.}
	\label{algorithm: New from Old}
	\begin{algorithmic}[1]
		\Require Homogeneous bent function $h:\F_2^{n}\rightarrow\F_2,h\in\MrsClass$ of degree $d$.
		\Ensure The set $H$ of new $d$-homogeneous bent functions from $\MrsClass$.
		\State \textbf{Put} $H\gets \{ \}$.
		\ForAll{$\mathcal{M}$-subspaces $U \in \mathcal{MS}_r(h)$}		  
		\State \textbf{Construct} a linear mapping $A_U$ as in Remark~\ref{remark: How to construct a linear mapping}, in order to get the \Indent Maiorana-McFarland representation~\eqref{equation: Maiorana-McFarland Representation}, i.e. $h_{\pi,\phi}(\mathbf{x},\mathbf{y}):=h(\mathbf{z}A_U)$. \EndIndent
		\State \textbf{Put} $H\gets H\cup \{ h_{\pi,\phi\oplus\omega}((\mathbf{x},\mathbf{y})T)\colon \omega \in \Omega_{T}(h_{\pi,\phi}) \} $, where $T:=A_{U}^{-1}$.
		\EndFor
	\end{algorithmic}
\end{algorithm}
\begin{remark}\label{remark: New homogeneous functions}
	Using Algorithm~\ref{algorithm: New from Old} and the mapping $T$, defined in~\eqref{equation: Very Good Linear Transformation}, one can construct $2^{\binom{6}{3}}$ new homogeneous cubic bent functions from any of functions $h^{12}_3$ and $h^{12}_5$, members of the $\MClass$ class. Such a big number of new functions can be explained in the following way. Let $h\in \{h^{12}_3, h^{12}_5\}$. First, we observe that the image of $\mathbf{y}$ after the linear transformation $\mathbf{y}\mapsto\mathbf{y}'=\mathbf{x}B \oplus \mathbf{y}D$ is given by:
	\begin{equation}\label{equation: Image of y}
	\mathbf{y}\mapsto\mathbf{y}'=(x_1 \oplus x_2, x_3 \oplus y_2, x_4 \oplus y_3, x_5 \oplus y_4, x_6 \oplus y_5, y_1 \oplus y_6).
	\end{equation}
	Since any two coordinates of the vector $\mathbf{y}'$ do not contain common variables $x_i$ and $y_j$, the linear transformation, defined in~\eqref{equation: Image of y}, is homogeneity-preserving. Thus, $\Omega_{T}(h_{\pi,\phi})$ is generated by monomials $\omega\colon\F_2^6\rightarrow\F_2$ of degree $3$, and hence $|\Omega_T(h_{\pi,\phi})|=2^{\binom{6}{3}}$. Finally, we note that some of the constructed homogeneous cubic bent functions are not equivalent to any of the known one, since their Smith normal forms, listed in Table~\ref{table: New Homogeneous Cubic Bent Functions}, are different from those given in Table~\ref{table: Invariant SNF}.
	\begin{table}[H]
		\caption{First $n/2$ elementary divisors of the Smith normal form $\snf(h^n_i)$ for the new homogeneous cubic bent functions $h^{12}_{6},h^{12}_{7}$ in $12$ variables.}
		\label{table: New Homogeneous Cubic Bent Functions}
		\centering
		\scalebox{1}{
			\begin{tabular}{|c|l|}
				\hline
				$h^{12}_i$    & \multicolumn{1}{c|}{$\snf(h^{12}_i)$}  \\ \hline
				$h^{12}_{6}$ & $\{*1^{24}, 2^{123}, 4^{292}, 8^{497}, 16^{674}, 32^{878},\dots *\}$  \\ \hline
				$h^{12}_{7}$ & $\{*1^{24}, 2^{123}, 4^{272}, 8^{516}, 16^{674}, 32^{880},\dots *\}$  \\ \hline
			\end{tabular}
		} 
	\end{table}
\end{remark}

\begin{theorem}
	There are at least 7 pairwise inequivalent homogeneous cubic bent functions on $\F_2^{12}$, inequivalent to $h^{12}_{pr.}$.
\end{theorem}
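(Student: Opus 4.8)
The plan is to exhibit the seven functions $h^{12}_1,\dots,h^{12}_5$ from Theorem~\ref{theorem: Analysis in 10,12 variables} together with the two functions $h^{12}_6,h^{12}_7$ produced in Remark~\ref{remark: New homogeneous functions}, and to verify the two required properties — pairwise inequivalence and inequivalence to the primary construction — purely through the equivalence invariants developed in Section~\ref{section: Geometric Invariants of Boolean functions}. All seven are already known to be homogeneous cubic bent: the first five by Theorem~\ref{theorem: Analysis in 10,12 variables}, and the last two by the construction carried out in Remark~\ref{remark: New homogeneous functions}, where Algorithm~\ref{algorithm: New from Old} outputs $d$-homogeneous bent functions with $d=3$.

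For pairwise inequivalence, I would appeal to the fact that $\snf(f)$ is invariant under equivalence and simply compare the Smith normal forms tabulated in Tables~\ref{table: Invariant SNF} and~\ref{table: New Homogeneous Cubic Bent Functions}. Reading off the listed elementary divisors, the multiplicity of the entry $1$ already partitions the seven functions into the groups $\{h^{12}_1,h^{12}_2,h^{12}_4\}$ (multiplicity $22$), $\{h^{12}_3,h^{12}_6,h^{12}_7\}$ (multiplicity $24$) and $\{h^{12}_5\}$ (multiplicity $26$). Within the first group the multiplicities of $2$ are pairwise distinct ($142$, $126$, $104$); within the second group $h^{12}_3$ separates off by its multiplicity of $2$ ($127$ against $123$), while $h^{12}_6$ and $h^{12}_7$, which coincide up to the multiplicity of $2$, are distinguished by the multiplicity of $4$ ($292$ against $272$). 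Hence no two of the seven Smith normal forms agree, and the functions are pairwise inequivalent.

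For inequivalence to $h^{12}_{pr.}$, the natural tool is $\Grank$, which is likewise an equivalence invariant by Theorem~\ref{theorem: Relations between two and gamma ranks}. By Corollary~\ref{corollary: Granks of bent functions} the primary construction satisfies $\Grank(h^{12}_{pr.})=14$, whereas Proposition~\ref{proposition: All information about SNF} identifies $\Grank(f)$ with the multiplicity $m_1$ of the entry $1$ in $\snf(f)$. Thus the seven functions have $\Grank$ equal to $22$, $24$ or $26$, all strictly exceeding $14$, and so none of them can be equivalent to $h^{12}_{pr.}$.

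The only genuinely delicate point is the final separation in the pairwise comparison: because $h^{12}_6$ and $h^{12}_7$ share the initial triple $1^{24},2^{123}$, their distinction rests entirely on the next elementary divisor, so one must be confident that the tabulated multiplicities of $4$ are correct. Every other separation is immediate from the two tables, which means the substantive labour of the theorem in fact resides in the computations underlying Theorem~\ref{theorem: Analysis in 10,12 variables} and Remark~\ref{remark: New homogeneous functions}, both of which I take as given here.
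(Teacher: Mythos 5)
Your proposal is correct and follows essentially the same route as the paper: the statement is assembled from Theorem~\ref{theorem: Analysis in 10,12 variables} (the five classes $h^{12}_1,\dots,h^{12}_5$), Remark~\ref{remark: New homogeneous functions} (the functions $h^{12}_6,h^{12}_7$ with the Smith normal forms of Table~\ref{table: New Homogeneous Cubic Bent Functions}), and the $\Grank$ comparison of Corollary~\ref{corollary: Granks of bent functions} and Proposition~\ref{proposition: All information about SNF} against $\Grank(h^{12}_{pr.})=14$, exactly as you do. Your observation that the seven tabulated Smith normal forms are already pairwise distinct, so no additional code-equivalence or design-isomorphism checks are needed for this particular statement, is a valid (and correctly verified) reading of the tables.
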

Finally we want to emphasize the fundamental difference between the primary construction $h^n_{pr.}$ and functions, constructed in Remark~\ref{remark: New homogeneous functions}. For the primary construction of homogeneous cubic bent function $h^n_{pr.}$ one needs to find a special Boolean function $\phi$ of degree 3, such that the non-homogeneous cubic Maiorana-McFarland function $f_{id,\phi}$ is homogeneous after the change of coordinates. In some sense, the identity permutation $id$ has a ``defect'', which makes $f_{id,0}$ never equivalent to a homogeneous cubic function.  But the specific choice of a cubic function $\phi$ helps to repair it. Since the functions constructed in Remark~\ref{remark: New homogeneous functions} are in that sense ``defect free'', it is essential to construct such functions systematically.
\begin{openproblem}
	Are there infinite families of permutations $\pi\colon\F_2^m\rightarrow\F_2^m$, such that for some non-degenerate linear transformation $T$ the function $f_{\pi,\psi}((\mathbf{x},\mathbf{y})T)$ is homogeneous cubic bent for all homogeneous cubic functions $\psi\colon\F_2^m\rightarrow\F_2$?
\end{openproblem}
\section{Bent functions outside the $\MClass$ class via direct sum construction}\label{section: Bent functions outside M}
In this section we show how one can choose bent functions $f$ and $g$, such that the direct sum $f\oplus g$ is not a member of the completed Maiorana-McFarland class $\MClass$. The idea of the approach is based on the following observation: if one can measure the maximum dimension of \emph{relaxed} $\mathcal{M}$\emph{-subspaces} (which we introduce below) of the components $f$ and $g$, then one can provide an upper bound for the linearity index $\ind(f\oplus g)$ and if it small enough, then $f\oplus g\notin \MClass$. 

Finally, using this recursive approach, we prove the series of results about the existence of cubic bent functions outside the $\MClass$ class, which can simultaneously be homogeneous and have no affine derivatives.
\subsection{The sufficient condition in terms of relaxed $\mathcal{M}$-subspaces}\label{subsection: MSubspaces}
Further, we identify $\F_2^{n+m}$ with $\F_2^n\times \F_2^m$. In this way, any vector $\mathbf{v}\in\F_2^{n+m}$ is uniquely represented by a pair $(\mathbf{v}_\mathbf{x},\mathbf{v}_\mathbf{y})$, where $\mathbf{v}_\mathbf{x}\in\F_2^n$ and $\mathbf{v}_\mathbf{y}\in\F_2^m$. Now let $U\in\mathcal{MS}(h)$, i.e. for all $\mathbf{a},\mathbf{b}\in U$ we have, that second-order derivatives satisfy $D_{\mathbf{a},\mathbf{b}}h=0$. This takes place if and only if $D_{\mathbf{a}_{\mathbf{x}},\mathbf{b}_{\mathbf{x}}}f=D_{\mathbf{a}_\mathbf{y},\mathbf{b}_\mathbf{y}}g=c_{\mathbf{a},\mathbf{b}}$, where $c_{\mathbf{a},\mathbf{b}}\in\F_2$ is a constant, depending on $\mathbf{a}$ and $\mathbf{b}$, since $g$ and $h$ do not have common variables. This observation leads to the following generalization of $\mathcal{M}$-subspaces (see Definition~\ref{definition: MSubspace}).
\begin{definition}  We will call a vector subspace $U$ a \emph{relaxed} $\mathcal{M}$\emph{-subspace} of a Boolean function $f\colon\F_2^n\rightarrow\F_2$, if for all $\mathbf{a},\mathbf{b}\in U$ second order derivatives $D_{\mathbf{a},\mathbf{b}}f$ are either constant zero or constant one functions, i.e $D_{\mathbf{a},\mathbf{b}}f=0$ or $D_{\mathbf{a},\mathbf{b}}f=1$.  We denote by $\mathcal{RMS}_r(f)$ the collection of all $r$-dimensional relaxed $\mathcal{M}$-subspaces of $f$ and by $\mathcal{RMS}(f)$ the collection $$\mathcal{RMS}(f):=\bigcup\limits_{r=1}^{n} \mathcal{RMS}_r(f).$$
\end{definition}
While the linearity index of a Boolean function (see Definition~\ref{definition: Linearity Index}) is defined as the maximal possible dimension of its $\mathcal{M}$-subspace, it is reasonable to define its analogue for relaxed $\mathcal{M}$-subspaces.
\begin{definition}
	For a Boolean function $f\colon\F_2^n\rightarrow\F_2$ its \emph{relaxed linearity index} $\rind(f)$ is defined by  $\rind(f):=\max\limits_{U\in \mathcal{RMS}(f)}\dim(U)$.
\end{definition}

\begin{example}
	Let $f\colon\F_2^6\rightarrow\F_2$ be the function from Example~\ref{example: M-subspace}. One can check, that the subspace $U=\scalebox{1}{$\langle (0, 1, 0, 0, 0, 1), (0, 0, 0, 1, 0, 0), (0, 0, 0, 0, 1, 1)\rangle$}$ is a relaxed $\mathcal{M}$-subspace of $f$, since its second-order derivatives $D_{\mathbf{a},\mathbf{b}}f$, which correspond to all two-dimensional vector subspaces $\langle\mathbf{a},\mathbf{b}\rangle $ of $U$, are constant zero or constant one functions
	$$\scalebox{0.83}{$
	\begin{gathered}
	\scalebox{1}{$\left\langle
		\begin{array}{cccccc}
		0 & 0 & 0 & 1 & 0 & 0 \\
		0 & 0 & 0 & 0 & 1 & 1 \\
		\end{array}
		\right\rangle\mapsto 0$},
	\scalebox{1}{$\left\langle
		\begin{array}{cccccc}
		0 & 1 & 0 & 0 & 0 & 1 \\
		0 & 0 & 0 & 0 & 1 & 1 \\
		\end{array}
		\right\rangle\mapsto 1$},
	\scalebox{1}{$\left\langle
		\begin{array}{cccccc}
		0 & 1 & 0 & 1 & 0 & 1 \\
		0 & 0 & 0 & 0 & 1 & 1 \\
		\end{array}
		\right\rangle\mapsto 1$},
	\scalebox{1}{$\left\langle
		\begin{array}{cccccc}
		0 & 1 & 0 & 0 & 0 & 1 \\
		0 & 0 & 0 & 1 & 0 & 0 \\
		\end{array}
		\right\rangle\mapsto 0$},
	\\
	\scalebox{1}{$\left\langle
		\begin{array}{cccccc}
		0 & 1 & 0 & 0 & 1 & 0 \\
		0 & 0 & 0 & 1 & 0 & 0 \\
		\end{array}
		\right\rangle\mapsto 0$},
	\scalebox{1}{$\left\langle
		\begin{array}{cccccc}
		0 & 1 & 0 & 0 & 0 & 1 \\
		0 & 0 & 0 & 1 & 1 & 1 \\
		\end{array}
		\right\rangle\mapsto 1$},
	\scalebox{1}{$\left\langle
		\begin{array}{cccccc}
		0 & 1 & 0 & 0 & 1 & 0 \\
		0 & 0 & 0 & 1 & 1 & 1 \\
		\end{array}
		\right\rangle\mapsto 1$}.
	\end{gathered}$}
	$$
\end{example}

Now we present some properties of collections of $\mathcal{M}$-subspaces as well as of relaxed ones.
\begin{proposition}
	Let $f\colon\F_2^n\rightarrow\F_2$ be a Boolean function and let $n=r+s$. The following hold:
	\begin{enumerate}
		\item $\mathcal{MS}(f)\subseteq \mathcal{RMS}(f)$.
		\item $\left| \mathcal{MS}_r(f) \right|$ and $\left| \mathcal{RMS}_r(f) \right|$ as well as $\ind(f)$ and $ \rind(f)$ are invariants under equivalence.
		\item $\ind(f)\le\rind(f)$ and $f\notin\MClass_{r,s}$ for all $r>\rind(f)$.
	\end{enumerate}		
\end{proposition}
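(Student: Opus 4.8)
The three claims increase in depth, so the plan is to dispatch them in order, concentrating the real effort on the equivalence-invariance in Part~2; Parts~1 and~3 will then be essentially formal.

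For Part~1 I would simply unwind the two definitions. An $\mathcal{M}$-subspace $U$ of $f$ satisfies $D_{\mathbf{a},\mathbf{b}}f=0$ for all $\mathbf{a},\mathbf{b}\in U$, and the constant zero function is in particular ``constant zero or constant one'', which is precisely the defining condition for $U$ to be a relaxed $\mathcal{M}$-subspace. Hence $\mathcal{MS}_r(f)\subseteq\mathcal{RMS}_r(f)$ for each $r$, and taking the union over $r$ gives $\mathcal{MS}(f)\subseteq\mathcal{RMS}(f)$.

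For Part~2 the heart of the matter is to track how a second-order derivative transforms under the equivalence relation. The plan is to write an equivalent function as $f'(\mathbf{x})=f(\mathbf{x}A)\oplus l(\mathbf{x})$ with $A\in AGL(n,2)$ and $l$ affine, and to let $L$ denote the invertible linear part of $A$, so that $\mathbf{x}A=\mathbf{x}L\oplus\mathbf{t}$ for a fixed $\mathbf{t}\in\F_2^n$. Expanding a first-order derivative gives $D_\mathbf{a}f'(\mathbf{x})=(D_{\mathbf{a}L}f)(\mathbf{x}A)\oplus D_\mathbf{a}l(\mathbf{x})$, and applying $D_\mathbf{b}$ once more, using that $D_{\mathbf{a},\mathbf{b}}l=0$ since $l$ is affine, I would obtain the identity
\begin{equation*}
D_{\mathbf{a},\mathbf{b}}f'(\mathbf{x})=(D_{\mathbf{a}L,\mathbf{b}L}f)(\mathbf{x}A).
\end{equation*}
Because $\mathbf{x}\mapsto\mathbf{x}A$ is a bijection of $\F_2^n$, the derivative $D_{\mathbf{a},\mathbf{b}}f'$ is the constant $0$ (resp. the constant $1$) exactly when $D_{\mathbf{a}L,\mathbf{b}L}f$ is. Consequently $U\mapsto UL:=\{\mathbf{u}L:\mathbf{u}\in U\}$ carries $\mathcal{M}$-subspaces of $f'$ bijectively onto those of $f$, and likewise relaxed $\mathcal{M}$-subspaces onto relaxed ones; since $L$ preserves dimension this restricts to bijections $\mathcal{MS}_r(f')\to\mathcal{MS}_r(f)$ and $\mathcal{RMS}_r(f')\to\mathcal{RMS}_r(f)$, giving equality of cardinalities, and taking maxima over all dimensions yields $\ind(f')=\ind(f)$ and $\rind(f')=\rind(f)$.

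For Part~3 the inequality $\ind(f)\le\rind(f)$ is immediate from Part~1, as the maximal dimension of a subspace in $\mathcal{MS}(f)$ cannot exceed that over the larger collection $\mathcal{RMS}(f)$. For the membership statement I would invoke Proposition~\ref{proposition: MM}: $f\in\MrsClass$ if and only if $f$ admits an $r$-dimensional $\mathcal{M}$-subspace, i.e. $\mathcal{MS}_r(f)\neq\varnothing$; since every subspace of an $\mathcal{M}$-subspace is again one, this holds precisely when $r\le\ind(f)$. Therefore $r>\rind(f)\ge\ind(f)$ forces $\mathcal{MS}_r(f)=\varnothing$, whence $f\notin\MrsClass$. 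The only step demanding genuine care is the derivative identity in Part~2: one must cleanly separate the linear part $L$ acting on the directions $\mathbf{a},\mathbf{b}$ from the translation $\mathbf{t}$ and the additive term $l$, and confirm that the latter two contribute nothing to a second-order derivative. Everything else follows formally from the definitions and from Proposition~\ref{proposition: MM}.
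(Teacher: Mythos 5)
Your proof is correct and follows essentially the same route as the paper: Parts 1 and 3 are dispatched from the definitions (with the membership claim reduced to the maximality of the linearity index via Proposition~\ref{proposition: MM}), and Part 2 rests on the same change-of-variables computation showing that second-order derivatives transform as $D_{\mathbf{a},\mathbf{b}}f'(\mathbf{x})=(D_{\mathbf{a}L,\mathbf{b}L}f)(\mathbf{x}A)$, so that an invertible linear map carries (relaxed) $\mathcal{M}$-subspaces of one function bijectively and dimension-preservingly onto those of the other. If anything, your version is slightly more careful than the paper's, which writes the subspace correspondence as $U'=UA^{-1}$ for the affine transformation $A$, whereas strictly only its linear part should act on the directions, with the translation and the affine summand $l$ cancelling in the second-order derivative exactly as you verify.
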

\begin{proof}
	\emph{1.} This follows from the definitions of collections $\mathcal{MS}(f)$ and $\mathcal{RMS}(f)$.
	
	\noindent \emph{2.} Let $f$ and $f'$ be equivalent, i.e. $f'(\mathbf{x})=f(\mathbf{x}A)\oplus l(\mathbf{x})$. Assume $U\in\mathcal{RMS}_r(f)$ and 
	let $U'=UA^{-1}$ with $\mathbf{a}',\mathbf{b}'\in U'$. Denoting $\mathbf{y}=\mathbf{x}A$, one can see from the following computations
	\[\arraycolsep=1.0pt\def\arraystretch{1.0}
	\begin{array}{rcl}
	D_{\mathbf{a}',\mathbf{b}'}f'(\mathbf{x}) & = & f'(\mathbf{x}\oplus \mathbf{a}' \oplus \mathbf{b}') \oplus f'(\mathbf{x}\oplus \mathbf{a}') \oplus f'(\mathbf{x} \oplus \mathbf{b}') \oplus f'(\mathbf{x}')\\
	& = & f(\mathbf{y}\oplus \mathbf{a}\oplus \mathbf{b}) \oplus f(\mathbf{y}\oplus \mathbf{a}) \oplus f(\mathbf{y}\oplus \mathbf{b}) \oplus f(\mathbf{y})=D_{\mathbf{a},\mathbf{b}}f(\mathbf{y})\\
	\end{array}
	\]
	that $U'\in\mathcal{RMS}_r(f')$. Since $A^{-1}$ maps different subspaces to different ones, we have that $\left| \mathcal{RMS}_r(f) \right|=\left| \mathcal{RMS}_r(f') \right|$ and $\left| \mathcal{MS}_r(f) \right|=\left| \mathcal{MS}_r(f') \right|$. Since $\dim(U)=\dim(U')$, we have 
	$\ind(f)=\ind(f')$ and $ \rind(f)=\rind(f')$.
	
	\noindent\emph{3.} First, since $\mathcal{MS}(f)\subseteq \mathcal{RMS}(f)$ the inequality  $\ind(f)\le\rind(f)$ holds. The statement $f\notin\MClass_{r,s}$ for all $r>\rind(f)$ now follows from the maximality of the linearity index. 
\end{proof}

In the next theorem we will show, that each relaxed $\mathcal{M}$-subspace of $f\oplus g$ is contained in another relaxed $\mathcal{M}$-subspace from $\mathcal{RMS}(f\oplus g)$, constructed via the direct product of relaxed $\mathcal{M}$-subspaces of $f$ and $g$.

\begin{theorem}\label{theorem: MM Subspaces of Direct Sums}
	Let $h(\mathbf{x},\mathbf{y}):=f(\mathbf{x})\oplus g(\mathbf{y}),\mbox{ for }\mathbf{x}\in\F_2^{n}$ and $\mathbf{y}\in\F_2^{m}$.
	\begin{enumerate}
		\item\label{theorem: Properties of MSubspaces, part i} If $V\in \mathcal{RMS}(f)$ and $W\in \mathcal{RMS}(g)$, then $V\times W\in\mathcal{RMS}(h)$.
		\item\label{theorem: Properties of MSubspaces, part ii} For any $ U\in \mathcal{RMS}(h)$ there exist $V\in \mathcal{RMS}(f)$ and $W\in \mathcal{RMS}(g)$, such that $U\subseteq V\times W$.
		\item\label{theorem: Properties of MSubspaces, part iii} $\rind(h) \le \rind(f) + \rind(g)$.	
	\end{enumerate}	
\end{theorem}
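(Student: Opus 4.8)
The plan is to reduce all three parts to the single structural fact that, because $h$ is a direct sum, its second-order derivatives split across the two coordinate blocks. Writing each vector $\mathbf{a}\in\F_2^{n+m}$ as $\mathbf{a}=(\mathbf{a}_\mathbf{x},\mathbf{a}_\mathbf{y})$ with $\mathbf{a}_\mathbf{x}\in\F_2^{n}$ and $\mathbf{a}_\mathbf{y}\in\F_2^{m}$, one has
$$D_{\mathbf{a},\mathbf{b}}h(\mathbf{x},\mathbf{y})=D_{\mathbf{a}_\mathbf{x},\mathbf{b}_\mathbf{x}}f(\mathbf{x})\oplus D_{\mathbf{a}_\mathbf{y},\mathbf{b}_\mathbf{y}}g(\mathbf{y}),$$
which follows at once from $h(\mathbf{x},\mathbf{y})=f(\mathbf{x})\oplus g(\mathbf{y})$ and is already recorded in the discussion preceding the theorem. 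The two summands depend on disjoint sets of variables, so their XOR is constant if and only if each summand is individually constant; this separation-of-variables principle is the workhorse for all three claims.

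For part~\ref{theorem: Properties of MSubspaces, part i} I would take arbitrary $\mathbf{a},\mathbf{b}\in V\times W$, so that $\mathbf{a}_\mathbf{x},\mathbf{b}_\mathbf{x}\in V$ and $\mathbf{a}_\mathbf{y},\mathbf{b}_\mathbf{y}\in W$. Since $V\in\mathcal{RMS}(f)$ the derivative $D_{\mathbf{a}_\mathbf{x},\mathbf{b}_\mathbf{x}}f$ is constant, and since $W\in\mathcal{RMS}(g)$ the derivative $D_{\mathbf{a}_\mathbf{y},\mathbf{b}_\mathbf{y}}g$ is constant; by the splitting identity $D_{\mathbf{a},\mathbf{b}}h$ is then the XOR of two constants, hence constant, so $V\times W\in\mathcal{RMS}(h)$.

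Part~\ref{theorem: Properties of MSubspaces, part ii} is the crux, and here I would define $V$ and $W$ to be the coordinate projections of $U$, namely $V:=\{\mathbf{a}_\mathbf{x}:(\mathbf{a}_\mathbf{x},\mathbf{a}_\mathbf{y})\in U\text{ for some }\mathbf{a}_\mathbf{y}\}$ and $W:=\{\mathbf{a}_\mathbf{y}:(\mathbf{a}_\mathbf{x},\mathbf{a}_\mathbf{y})\in U\text{ for some }\mathbf{a}_\mathbf{x}\}$. Being images of the subspace $U$ under linear projection maps, $V$ and $W$ are subspaces, and the inclusion $U\subseteq V\times W$ holds by construction. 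It then remains to verify $V\in\mathcal{RMS}(f)$ and $W\in\mathcal{RMS}(g)$: given $\mathbf{a}_\mathbf{x},\mathbf{b}_\mathbf{x}\in V$, I would choose lifts $\mathbf{a}=(\mathbf{a}_\mathbf{x},\mathbf{a}_\mathbf{y})$ and $\mathbf{b}=(\mathbf{b}_\mathbf{x},\mathbf{b}_\mathbf{y})$ in $U$, which exist by the definition of the projection; since $U\in\mathcal{RMS}(h)$ the derivative $D_{\mathbf{a},\mathbf{b}}h$ is constant, and by separation of variables its $f$-summand $D_{\mathbf{a}_\mathbf{x},\mathbf{b}_\mathbf{x}}f$ must itself be constant. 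Hence $V\in\mathcal{RMS}(f)$, and the symmetric argument gives $W\in\mathcal{RMS}(g)$.

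Finally, part~\ref{theorem: Properties of MSubspaces, part iii} is a one-line corollary: picking $U\in\mathcal{RMS}(h)$ of maximal dimension $\rind(h)$ and applying part~\ref{theorem: Properties of MSubspaces, part ii} yields $V\in\mathcal{RMS}(f)$ and $W\in\mathcal{RMS}(g)$ with $U\subseteq V\times W$, whence $\rind(h)=\dim(U)\le\dim(V)+\dim(W)\le\rind(f)+\rind(g)$. I expect the only genuinely non-mechanical decision to be the choice of $V$ and $W$ as \emph{projections} in part~\ref{theorem: Properties of MSubspaces, part ii}: intersecting $U$ with the coordinate subspaces, or taking sections, would not in general recover enough of $U$ to force $U\subseteq V\times W$, whereas the projections are precisely what the splitting identity makes available and automatically deliver the required inclusion.
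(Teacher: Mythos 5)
Your proposal is correct and follows essentially the same route as the paper: the splitting identity $D_{\mathbf{a},\mathbf{b}}h = D_{\mathbf{a}_\mathbf{x},\mathbf{b}_\mathbf{x}}f \oplus D_{\mathbf{a}_\mathbf{y},\mathbf{b}_\mathbf{y}}g$ for part~1, coordinate projections of $U$ together with separation of variables for part~2, and the dimension count for part~3. The only cosmetic difference is that the paper spells out the separation step via auxiliary functions $f',g'$ on $\F_2^{n+m}$ and adds two instances of the constancy equation, whereas you invoke the (equally valid) principle that an XOR of functions in disjoint variables is constant only if each summand is.
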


\begin{proof}
	\emph{1.} Let $ U=V\times W$. Since $V\in\mathcal{RMS}(f)$ and $ W\in\mathcal{RMS}(g)$, then for all $\mathbf{v}_1,\mathbf{v}_2\in V$ holds $D_{\mathbf{v}_1,\mathbf{v}_2}f=c_{\mathbf{v}_1,\mathbf{v}_2}$ and for all $\mathbf{w}_1,\mathbf{w}_2\in W$ holds $D_{\mathbf{w}_1,\mathbf{w}_2}g=c_{\mathbf{w}_1,\mathbf{w}_2}$, where $c_{\mathbf{v}_1,\mathbf{v}_2}$ and $c_{\mathbf{w}_1,\mathbf{w}_2}$ are some constants. In this way, for all pairs $\mathbf{u}_1=(\mathbf{v}_1,\mathbf{w}_1)$ and $\mathbf{u}_2=(\mathbf{v}_2,\mathbf{w}_2)$ holds
	$D_{\mathbf{u}_1,\mathbf{u}_2}h=D_{\mathbf{v}_1,\mathbf{v}_2}f\oplus D_{\mathbf{w}_1,\mathbf{w}_2}g=c_{\mathbf{v}_1,\mathbf{v}_2}\oplus c_{\mathbf{w}_1,\mathbf{w}_2}$ and, hence, $ U\in\mathcal{RMS}(h)$.
	
	\noindent \emph{2.} Recall that any vector $\mathbf{v}\in\F_2^{n+m}$ is identified with a pair $(\mathbf{v}_\mathbf{x},\mathbf{v}_\mathbf{y})$, where $\mathbf{v}_\mathbf{x}\in\F_2^n$ and $\mathbf{v}_\mathbf{y}\in\F_2^m$. We define two vector subspaces $V\subseteq\F_2^n$ and $W\subseteq\F_2^m$ as follows:
	\begin{equation*}\label{equation: V and W}
	V=\spa(\{\mathbf{u}_\textbf{x}\colon \mathbf{u}\in U \})\mbox{ and } W=\spa(\{\mathbf{u}_y\colon \mathbf{u}\in U \}).
	\end{equation*}
	We will show, that $V\in\mathcal{RMS}(f)$ and $W\in\mathcal{RMS}(g)$. We define two functions $f',g'\colon\F_2^{n+m}\rightarrow\F_2^{n+m}$ as $f'(\mathbf{x},\mathbf{y}):=f(\mathbf{x})$ for all $\mathbf{y}\in\F_2^m$ and $g'(\mathbf{x},\mathbf{y}):=g(\mathbf{y})$ for all $\mathbf{x}\in\F_2^n$. Since $ U\in \mathcal{RMS}(h)$, then for all $\mathbf{u}_1,\mathbf{u}_2\in U$ the equality 
	\begin{equation}\label{equation: MM subspace of direct sum}
	D_{\mathbf{u}_1,\mathbf{u}_2}h(\mathbf{x},\mathbf{y})=D_{\mathbf{u}_1,\mathbf{u}_2}f'(\mathbf{x},\mathbf{y})\oplus D_{\mathbf{u}_1,\mathbf{u}_2}g'(\mathbf{x},\mathbf{y})=c_{\mathbf{u}_1,\mathbf{u}_2}
	\end{equation}
	holds for all $(\mathbf{x},\mathbf{y})\in\F_2^{n+m}$.
	Let $\mathbf{x}_1,\mathbf{x}_2\in\F_2^n$ and consider the following equalities
	\begin{align}
	\label{equation: first} D_{\mathbf{u}_1,\mathbf{u}_2}f'(\mathbf{x}_1,\mathbf{y})\oplus D_{\mathbf{u}_1,\mathbf{u}_2}g'(\mathbf{x}_1,\mathbf{y})= &c_{\mathbf{u}_1,\mathbf{u}_2}\\
	\label{equation: second}
	D_{\mathbf{u}_1,\mathbf{u}_2}f'(\mathbf{x}_2,\mathbf{y})\oplus D_{\mathbf{u}_1,\mathbf{u}_2}g'(\mathbf{x}_2,\mathbf{y})= &c_{\mathbf{u}_1,\mathbf{u}_2},
	\end{align}
	which hold for any $\mathbf{y}\in\F_2^m$ due to~\eqref{equation: MM subspace of direct sum}. Adding equation~\eqref{equation: first} to~\eqref{equation: second}, one gets $D_{\mathbf{u}_1,\mathbf{u}_2}f'(\mathbf{x}_1,\mathbf{y})=D_{\mathbf{u}_1,\mathbf{u}_2}f'(\mathbf{x}_2,\mathbf{y})$ since $g'$ depends on the variable $\mathbf{x}$ ``fictively''. Now, since 
	$f'$ depends on the variable $\mathbf{y}$ ``fictively'', we get that for all $\mathbf{v}_1,\mathbf{v}_2\in V$ the equality $D_{\mathbf{v}_1,\mathbf{v}_2}f(\mathbf{x}_1)=D_{\mathbf{v}_1,\mathbf{v}_2}f(\mathbf{x}_2)$ holds for all $\mathbf{x}_1,\mathbf{x}_2\in\F_2^n$ and hence $D_{\mathbf{v}_1,\mathbf{v}_2}f=c_{\mathbf{v}_1,\mathbf{v}_2}$ (one can think about $\mathbf{v}_1$ and $\mathbf{v}_2$ as $\left( \mathbf{u}_1 \right)_{\mathbf{x}}$ and $\left( \mathbf{u}_2 \right)_{\mathbf{x}}$, respectively). Thus we have shown, that $V\in\mathcal{RMS}(f)$. Since $f$ and $g$ are interchangeable, we get $W\in\mathcal{RMS}(g)$.
	Clearly, $ U\subseteq V\times W$ and by the previous statement we have $V\times W\in\mathcal{RMS}(h)$.
	
	\noindent\emph{3.} Let $U\in\mathcal{RMS}(h)$ and $\dim(U)=\rind(h)$. By the previous statement there exist $V\in \mathcal{RMS}(f)$ and $W\in \mathcal{RMS}(g)$, such that $U\subseteq V\times W$. Now, using the following series of inequalities	
	\begin{align*}
	\rind(h)=\dim(U)&\le \dim (V\times W)= \dim (V)+\dim(W)\\
	&\le \max\limits_{V\in \mathcal{RMS}(f)}\dim\left(V\right)+ \max\limits_{W\in\mathcal{RMS}(g)}\dim\left(W\right)\\
	&=\rind(f)+\rind(g).
	\end{align*}
	we complete the proof. 
\end{proof}	
The next corollary provides a sufficient condition on bent functions $f$ and $g$ for $f\oplus g$ being not in the $\MClass$ class in terms of their relaxed $\mathcal{M}$-subspaces.
\begin{corollary}\label{corollary: Sufficient condition for outside M}
	Let $f\colon\F_2^n\rightarrow\F_2$ and $g\colon\F_2^m\rightarrow\F_2$ be two Boolean bent functions. If $f$ and $g$ satisfy $\rind(f)<n/2$ and $\rind(g) \le m/2$, then $f\oplus k \cdot g\notin\MClass$ on $\F_2^{n+km}$ for all $k\in\N$.
\end{corollary}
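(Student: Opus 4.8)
The plan is to reduce the statement to a single bound on the relaxed linearity index and then invoke the subadditivity already established in Theorem~\ref{theorem: MM Subspaces of Direct Sums}. Write $N:=n+km$ and $h:=f\oplus k\cdot g$. Since $f$ and $g$ are bent, $n$ and $m$ are even, hence $N$ is even, $N/2\in\N$, and $\MClass=\MClass_{N/2,N/2}$ by definition. The key point is that I never need to analyse $\MClass$ directly: by the Proposition preceding the theorem (its third item), any Boolean function $h'$ satisfies $h'\notin\MClass_{r,s}$ whenever $r>\rind(h')$. Taking $r=s=N/2$, it therefore suffices to prove the single inequality $\rind(h)<N/2$, after which $h\notin\MClass_{N/2,N/2}=\MClass$ follows immediately.

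To bound $\rind(h)$ I would iterate part~\ref{theorem: Properties of MSubspaces, part iii} of Theorem~\ref{theorem: MM Subspaces of Direct Sums}. A short induction on $k$, using the splitting $k\cdot g=g\oplus((k-1)\cdot g)$, gives $\rind(k\cdot g)\le k\cdot\rind(g)$; applying the theorem once more to $h=f\oplus(k\cdot g)$ yields
\begin{equation*}
\rind(h)\le\rind(f)+\rind(k\cdot g)\le\rind(f)+k\cdot\rind(g).
\end{equation*}
Substituting the hypotheses $\rind(f)<n/2$ and $\rind(g)\le m/2$ then gives
\begin{equation*}
\rind(h)\le\rind(f)+k\cdot\rind(g)<\frac{n}{2}+k\cdot\frac{m}{2}=\frac{n+km}{2}=\frac{N}{2}.
\end{equation*}
Thus $\rind(h)<N/2$, exactly as required. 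The strictness is forced by the strict hypothesis on $\rind(f)$ alone, so the argument goes through for every $k\in\N$, including the borderline case $\rind(g)=m/2$.

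I expect no serious obstacle: all of the genuine content is packaged inside Theorem~\ref{theorem: MM Subspaces of Direct Sums}, and the remaining steps (the induction on $k$ and the final substitution) are routine. The one point that deserves care is the reduction carried out in the first paragraph, namely that membership of a bent function in $\MClass$ is certified by a relaxed $\mathcal{M}$-subspace of dimension $N/2$, so that failing to reach this threshold guarantees exclusion from $\MClass$; this is precisely the content of the earlier Proposition and is what makes $\rind$ the right quantity to track. It is also worth recording that, since the direct sum of bent functions is again bent, $h$ is genuinely a bent function lying outside $\MClass$, which is the intended meaning of the corollary.
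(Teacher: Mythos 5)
Your proof is correct and is essentially the argument the paper intends: the corollary is stated without an explicit proof precisely because it follows by combining part 3 of Theorem~\ref{theorem: MM Subspaces of Direct Sums} (iterated over the $k$-fold direct sum to get $\rind(f\oplus k\cdot g)\le \rind(f)+k\cdot\rind(g)<\frac{n+km}{2}$) with the third item of the preceding proposition, which excludes membership in $\MClass_{r,s}$ whenever $r$ exceeds the relaxed linearity index. Your filling-in of the induction on $k$ and the identification $\MClass=\MClass_{N/2,N/2}$ matches this intended route exactly.
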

\begin{remark}
	Throughout the paper we will call a Boolean function $f$ on $\F_2^n$ \emph{strongly extendable}, if $\rind(f)<n/2$ and \emph{weakly extendable}, if $\rind(f)=n/2$. In this way, if one wants to extend a strongly extendable function $f$ with Corollary~\ref{corollary: Sufficient condition for outside M}, it is enough to take a weakly extendable function $g$, while for the extension of a weakly extendable function $g$ one has to take a strongly extendable function $f$.
\end{remark}
\begin{remark}\label{remark: Compute rind(f)}
	For a given function $f$ one can compute the relaxed linearity index $\rind(f)$ in the same way as the linearity index $\ind(f)$, but with only one change. Instead of the second-order derivative $D_{\mathbf{a},\mathbf{b}}f$, given by its ANF
	$$D_{\mathbf{a},\mathbf{b}}f(\mathbf{x})=\bigoplus\limits_{\substack{\mathbf{v}\in\F_2^n}}c_{\mathbf{v}}({\mathbf{a},\mathbf{b})} \left( \prod_{i=1}^{n} x_i^{v_i} \right),$$
	where coefficients $c_\mathbf{v}$ depend on $\mathbf{a}$ and $\mathbf{b}$, one considers the \emph{``relaxed'' second-order derivative} $RD_{\mathbf{a},\mathbf{b}}f$, defined by $RD_{\mathbf{a},\mathbf{b}}f(\mathbf{x}):= D_{\mathbf{a},\mathbf{b}}f(\mathbf{x})\oplus c_\mathbf{0}(\mathbf{a},\mathbf{b})$ and use it as the input of Algorithm~\ref{algorithm: f in M} in the way already described in Remark~\ref{remark: Compute ind(f)}. 
\end{remark}
\subsection{Application to homogeneous cubic bent functions without affine derivatives}\label{subsection: Application}
In order to use Corollary~\ref{corollary: Sufficient condition for outside M} for the construction of cubic bent functions outside $\MClass$, which can be  homogeneous or have no affine derivatives, we need to find first such functions in a small number of variables and check, whether they are weakly or strongly extendable.

First we check, whether the equivalence classes of cubic bent functions in six~\cite[p. 303]{ROTHAUS1976300} and eight~\cite[p. 102]{Braeken2006} variables, contain functions with the mentioned properties. Since all cubic bent functions in 6 and 8 variables are members of the $\MClass$ class, as it was shown in~\cite[p. 37]{Dillon1972} and~\cite[p. 103]{Braeken2006} respectively, the best what one expects to find is a weakly extendable cubic bent function. In this way:
\begin{itemize}
	\item The only (up to equivalence) weakly extendable cubic bent function in 6 variables is the third Rothaus' function~\cite[p. 303]{ROTHAUS1976300}, denoted here by $R_3$. It has no affine derivatives and is not equivalent to any homogeneous cubic bent function.
	\item An example of  weakly extendable homogeneous cubic bent function in 8 variables is given by the function $h^8_1$. Like any other cubic bent function in eight variables, it has affine derivatives~\cite{HOU1998149}.
\end{itemize}
Now we analyze homogeneous cubic bent functions in 10 and 12 variables.
\begin{itemize}
	\item An example of a strongly extendable cubic bent function in 10 variables is represented by the function $h^{10}_4$, which is simultaneously homogeneous and has no affine derivatives.
	\item Since all the mentioned functions in 12 variables belong to the $\MClass$ class, they can not be strongly extendable. Nevertheless, among them we found a weakly extendable homogeneous function $h^{12}_5$ without affine derivatives.
\end{itemize}
We summarize these data in Table~\ref{table: Good Examples} and list all the used functions in the Appendix~\ref{section: Appendix}.
\begin{table}[H]
	\centering
	\caption{Extendable cubic bent functions in a small number of variables.}\label{table: Good Examples}
	\scalebox{1}{
		\begin{tabular}{|c|c|c|c|c|}
			\hline
			$\#$ of variables, $n$    & $6$ 				& $8$ 					& $10$ 			& $12$ 		\\ \hline
			$\rind$          	  & $3$ 				& $4$					& $4$  			& $6$ 		\\ \hline			
			Is homogeneous?        	  & $\mathbf{\times}$	& $\checkmark$  		& $\checkmark$  & $\checkmark$ 	\\ \hline
			Has no aff. derivatives?  & $\checkmark$   		& $\mathbf{\times}$    	& $\checkmark$  & $\checkmark$\\ \hline 
			Example			  & $R_3$				& $h^8_1$				& $h^{10}_4$	& $h^{12}_5$			\\ \hline     			
		\end{tabular}
	}
\end{table}
Now we proceed to the proof of our main theorem: the series of existence results about cubic bent functions with nice cryptographic properties.
\begin{theorem}\label{theorem: Existence 1} On $\F_2^n$ there exist:
	\begin{enumerate}
	\item Cubic bent functions outside $\MClass$ for all $n\ge 10$.
	\item Cubic bent functions without affine derivatives outside $\MClass$ for all $n\ge 26$.
	\item Homogeneous cubic bent functions outside $\MClass$ for all $n\ge 26$.
	\item Homogeneous cubic bent functions without affine derivatives outside $\MClass$ for all $n\ge 50$.
\end{enumerate}
\end{theorem}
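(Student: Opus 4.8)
The plan is to construct every required function as a direct sum of the four ``seed'' functions listed in Table~\ref{table: Good Examples} -- $R_3$ ($n=6$), $h^8_1$ ($n=8$), $h^{10}_4$ ($n=10$), $h^{12}_5$ ($n=12$) -- and to control each target property summand by summand. Bentness and cubicity are automatic: a direct sum of bent functions is bent, and because the summands involve disjoint variables the degree of the sum equals the maximum of the summand degrees, hence $3$. Homogeneity is governed by Result~\ref{result: Direct sum of hom. bents}, which says $f\oplus g$ is $3$-homogeneous bent iff both $f$ and $g$ are. The ``outside $\MClass$'' property I would handle not through the single-seed form of Corollary~\ref{corollary: Sufficient condition for outside M} but through its engine, the bound $\rind(f_1\oplus\dots\oplus f_t)\le\sum_i\rind(f_i)$ obtained by iterating Theorem~\ref{theorem: MM Subspaces of Direct Sums}: since $\MClass=\MClass_{n/2,n/2}$, any direct sum with $\sum_i\rind(f_i)<n/2$ lies outside $\MClass$. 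Assigning to a summand on $\F_2^{n_i}$ the \emph{defect} $\rind(f_i)-n_i/2$, this happens as soon as the total defect is negative; reading Table~\ref{table: Good Examples}, only $h^{10}_4$ has negative defect ($4-5=-1$), while $R_3,h^8_1,h^{12}_5$ each have defect $0$. Thus every admissible direct sum must contain at least one copy of $h^{10}_4$, and one copy already suffices.

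The one property still uncontrolled is the absence of affine derivatives, for which I would first record a preservation lemma: for cubic $f,g$ on disjoint variables, $D_{(\mathbf a,\mathbf b)}(f\oplus g)=D_{\mathbf a}f\oplus D_{\mathbf b}g$, and a sum of two polynomials in disjoint variables has degree equal to the maximum of the two degrees, so $\deg D_{(\mathbf a,\mathbf b)}(f\oplus g)\le1$ iff $\deg D_{\mathbf a}f\le1$ and $\deg D_{\mathbf b}g\le1$; hence $\mathbb{FP}_{f\oplus g}=\mathbb{FP}_f\times\mathbb{FP}_g$ and $\dim\mathbb{FP}_{f\oplus g}=\dim\mathbb{FP}_f+\dim\mathbb{FP}_g$. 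Consequently $f\oplus g$ has no affine derivatives iff both summands have none. With this lemma the admissible seed sets become: all four seeds for statement~(1); the affine-derivative-free seeds $\{R_3,h^{10}_4,h^{12}_5\}$ for (2); the homogeneous seeds $\{h^8_1,h^{10}_4,h^{12}_5\}$ for (3); and the seeds that are simultaneously homogeneous and free of affine derivatives, $\{h^{10}_4,h^{12}_5\}$, for (4). In every case $h^{10}_4$ is the unique admissible strongly extendable seed.

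It then remains to show, for each statement, that every even $n$ above the claimed threshold can be written as $10+s$ with $s$ a nonnegative combination of the admissible seed sizes, the leading $10$ being the mandatory $h^{10}_4$. This is a numerical-semigroup computation. For (1) the sizes are $\{6,8,10,12\}$ and $10+\langle 6,8,10,12\rangle$ realizes $n=10$ and all even $n\ge16$; for (2) the sizes $\{6,10,12\}$ give $10+\langle 6,10,12\rangle$, covering all even $n\ge26$; for (3) the sizes $\{8,10,12\}$ give $10+\langle 8,10,12\rangle$, again all even $n\ge26$; and for (4) only $\{10,12\}$ remain, so $10+\langle 10,12\rangle$ covers all even $n\ge50$, the large threshold being forced by scarcity of seeds (the Frobenius number of $\langle 5,6\rangle$ is $19$, i.e.\ $38$ after doubling, so $48$ is the last even value missed). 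The genuine gap occurs only in (1): the construction misses exactly $n=12$ and $n=14$, since the smallest strongly extendable seed has size $10$ while $2$ and $4$ are not seed sizes, and the quadratic pieces one might append fall outside Corollary~\ref{corollary: Sufficient condition for outside M} (a quadratic bent $q$ on $\F_2^m$ has $\rind(q)=m>m/2$). These two sporadic cases I would close by exhibiting explicit cubic bent functions outside $\MClass$ on $\F_2^{12}$ and $\F_2^{14}$, verifying $\ind(\cdot)<n/2$ directly via Algorithm~\ref{algorithm: f in M} and listing them in the Appendix.

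The main obstacle is the interplay between the constraints rather than any isolated deduction. As the target properties accumulate, the set of admissible seeds shrinks, which simultaneously raises the numerical threshold and deletes the small block sizes needed to patch short residues -- this is precisely why statement~(4), restricted to sizes $10$ and $12$, only reaches $n\ge50$, and why the smallest cases of statement~(1) cannot be handled uniformly and require the two ad hoc examples at $n=12,14$. Underpinning everything is the computational verification that the few seeds genuinely carry the asserted combination of properties; in particular that $h^{10}_4$ is at once strongly extendable, homogeneous, and free of affine derivatives is the load-bearing fact on which the entire recursive scheme depends.
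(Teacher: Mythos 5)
Your construction is the paper's construction: the same four seeds from Table~\ref{table: Good Examples}, the same observation that at least one copy of $h^{10}_4$ must and does suffice to appear in every sum, the same two preservation facts (Result~\ref{result: Direct sum of hom. bents} for homogeneity, and the product formula $\mathbb{FP}_{f\oplus g}=\mathbb{FP}_f\times\mathbb{FP}_g$, which the paper states without proof inside its case analysis and you prove correctly), and the same numerical-semigroup arithmetic yielding the thresholds $10$, $26$, $26$, $50$ with the same lists of missed values. Your ``defect'' bookkeeping is not a different route: it is exactly the combination of $\rind(f\oplus g)\le\rind(f)+\rind(g)$ from Theorem~\ref{theorem: MM Subspaces of Direct Sums} with the fact that $f\notin\MrsClass$ whenever $r>\rind(f)$, i.e.\ the engine behind Corollary~\ref{corollary: Sufficient condition for outside M}. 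All of the seed data you rely on (the $\rind$ values, homogeneity, absence of affine derivatives, and $h^{10}_4\notin\MClass$) is the same computational input the paper uses.

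The genuine gap is your treatment of $n=12$ and $n=14$ in part~(1). You correctly note that these values are unreachable from the seeds and that appending a quadratic bent function $Q_k$ on $\F_2^k$, $k=n-10$, escapes Corollary~\ref{corollary: Sufficient condition for outside M} because $\rind(Q_k)=k$; but you then abandon that route and instead promise unspecified ``explicit cubic bent functions outside $\MClass$'' on $\F_2^{12}$ and $\F_2^{14}$, to be verified by Algorithm~\ref{algorithm: f in M}. That is a placeholder, not a proof: whether such functions exist at $n=12,14$ is precisely part of what the theorem asserts (before this paper it was open whether any cubic bent function outside $\MClass$ exists for $n\ge10$), so one cannot simply posit them. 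The paper does take the quadratic route: it sets $h_n'=h^{10}_4\oplus Q_{n-10}$ and, although Corollary~\ref{corollary: Sufficient condition for outside M} is inapplicable, invokes part~2 of Theorem~\ref{theorem: MM Subspaces of Direct Sums}, which confines every relaxed $\mathcal{M}$-subspace of $h_n'$ --- hence every $\mathcal{M}$-subspace --- to a product $V\times W$ with $V\in\mathcal{RMS}(h^{10}_4)$ and $W\in\mathcal{RMS}(Q_k)$; it then suffices to check that no $n/2$-dimensional subspace $U\subseteq V\times W$ is an $\mathcal{M}$-subspace of $h_n'$, a small structured computation rather than the unstructured search your Algorithm~\ref{algorithm: f in M} run would require. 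Replacing your two ad hoc examples by this argument closes the gap; everything else in your proposal stands.
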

\begin{proof}
In all the four cases the idea of the proof is the same: construct a strongly extendable Boolean function $h_n$ in $n=6i+8j+10k+12l$ variables of the form
\begin{equation}\label{equation: Main Construction}
h_n:=i\cdot R_3 \oplus j\cdot h^8_1 \oplus k\cdot h^{10}_4 \oplus l\cdot h^{12}_5
\end{equation}
and find the minimal value $n_0$, such that for all $n\ge n_0$ the function $h_n$ inherits the properties of its components from Table~\ref{table: Good Examples}. Since the only strongly extendable function is $h^{10}_4$ in $10$ variables, we require that in all the four cases below $k\ne 0$:

\noindent \emph{Case 1.} Since the first case has nothing to do with homogeneity and having no affine derivatives, one can use all the components from Table~\ref{table: Good Examples}. Clearly, the smallest value of $n$ is $n_0=16$ and in order to cover the missing values of $n\in \{12,14\}$, we construct a function $h_n'$ of the form
\begin{equation*}
h_{n}'(x_{1},\ldots,x_{n}):=h^{10}_4(x_{1},\ldots,x_{10})\oplus Q_k(x_{11},\ldots,x_{n})\mbox{ with } k=n-10.
\end{equation*}
Here $Q_{k}:=f_{id,0}$ is the quadratic bent function in $k$ variables, defined by the ``standard'' inner product on $\F_2^{k}$. Since for the quadratic bent function $Q_{k}$ its relaxed linearity index $\rind(Q_{k})=k$, we can not use Corollary~\ref{corollary: Sufficient condition for outside M}. However, by the second part of Theorem~\ref{theorem: MM Subspaces of Direct Sums},  one can verify, that $h_n'\notin\MClass$, by showing, that none of the vector subspaces $U$ of the form $$\{U\subseteq V \times W\colon V\in\mathcal{RMS}(h^{10}_4),W\in\mathcal{RMS}(Q_k)\}$$ is an $\mathcal{M}$-subspace of the function $h_n'$.

\noindent \emph{Case 2.} 
Since there are no weakly extendable homogeneous cubic bent functions in six variables, we can use only components $h^{8}_1,h^{10}_4,h^{12}_5$ in the equation~\eqref{equation: Main Construction}. One can see, that the smallest value of $n$ is $n_0=26$ and the missing values are in the set $\{14,16,24\}$.

\noindent \emph{Case 3.} First, we observe that the direct sum of two functions has no affine derivatives, if and only if both of them have no affine derivatives. Hence, the only functions we can use are $R_3,h^{10}_4,h^{12}_5$. In this way, the smallest value of $n$ is $n_0=26$ and the missing values are in the set $\{12,14,18,24\}$.

\noindent \emph{Case 4.} Finally, since the only extendable functions, which are simultaneously homogeneous and have no affine derivatives are $h^{10}_4$ and $h^{12}_5$, we observe, that the smallest value of $n$ is $n_0=50$ and the missing values of $n$ are in the set $\{12,14,16,18,24,26,28,36,38,48\}$, which completes the proof.
\end{proof}

\section{Conclusion}\label{section: Conclusion}
In this paper we proved the existence of cubic bent functions outside the completed Maiorana-McFarland class $\MClass$ on $\F_2^n$ for all $n\ge 10$ and showed that for almost all values of $n$ these functions can simultaneously be homogeneous and have no affine derivatives. The reason, why some values of $n$ are not covered by our proof is explained by the non-existence of examples with desired properties in 6 and 8 variables, which are necessary for the used recursive framework. 

In general, we expect that homogeneous cubic bent functions without affine derivatives outside $\MClass$ exist for all even $n\ge10$ and we leave this as an open problem. Since our proof technique is based on the direct sum construction of functions, some of them being members of $\MClass$, the functions constructed in such a way will presumably have bad cryptographic primitives (see~\cite[p. 330]{carlet_2010}). Thus, we suggest the following problem.
\begin{openproblem}
	Construct homogeneous cubic bent functions without affine derivatives outside the $\MClass$ class without the use of the direct sum.
\end{openproblem}
The next problem, which we would like to address, is related to the normality of cubic bent functions. Recall that a Boolean function $f$ on $\F_2^n$ is said to be normal (weakly normal), when it is constant (affine, but not constant) respectively, on some affine subspace $U$ of $\F_2^n$ of dimension $\lceil n / 2\rceil$. In this case $f$ is said to be normal (weakly normal) with respect to the flat $U$.  It is well-known that all quadratic bent functions are normal. Moreover, one can also construct non-normal as well as non-weakly normal bent functions of all degrees $d\ge4$, as it follows from~\cite[Fact 22]{CanteautDDL06}. At the same time all cubic bent functions in $n=6$ variables are normal or weakly-normal, while for $n=8$ they are proved to be normal~\cite{Charpin04}.

Since the functions $h^{10}_3$ and $h^{10}_4$ do not belong to the completed Maiorana-McFarland class, they are good candidates to be checked for the normality. Based on our parallel implementation of~\cite[Algorithm 1]{CanteautDDL06} in \texttt{Mathematica}~\cite{WolframMathematica112} we observe, that the function $h^{10}_3$ is normal on the flat $48\oplus \langle \mbox{g3},\mbox{8p},\mbox{4q},\mbox{2m},\mbox{1j} \rangle$ and the function $h^{10}_4$ is normal on the flat $5\oplus \langle \mbox{i5},\mbox{8h},\mbox{6n},\mbox{1g},\mbox{f}\rangle$. Here we describe each binary vector of a flat by 32-base representation, using the following alphabet  
\begin{equation}\label{equation: alphabet}
0\mapsto0,\ldots,\mbox{f}\mapsto15,\mbox{g}\mapsto16,\ldots,\mbox{v}\mapsto31.
\end{equation}
In this way, since one still has no examples of non-weakly normal cubic bent functions, it is reasonable to ask the following question.
\begin{openproblem}
	Do non-weakly normal cubic bent functions exist?
\end{openproblem}

\noindent Finally we list all the homogeneous cubic bent functions used in the paper.

\section*{Acknowledgments}
	The authors would like to thank Pantelimon St\u{a}nic\u{a} for providing homogeneous cubic bent functions from~\cite[p. 149]{Charnes2002}.

\bibliographystyle{spmpsci}

\appendix
\section{Appendix: Known inequivalent homogeneous cubic bent functions}
\label{section: Appendix}
Algebraic normal forms of $n$-variable homogeneous cubic bent functions used in the paper. We abbreviated $0\le i\le9$ for the variable $x_i$, variables $x_{10}$ and $x_{11}$ are replaced by $a$ and $b$ respectively.

\begin{itemize}
	\item[$h^{6}_{1}.$] $012 \oplus 013 \oplus 014 \oplus 023 \oplus 025 \oplus 034 \oplus 035 \oplus 045 \oplus 124 \oplus 125 \oplus 134 \oplus 135 \oplus 145 \oplus 234 \oplus 235 \oplus 245$
	\item[$h^{8}_{1}.$] $014 \oplus 016 \oplus 023 \oplus 025 \oplus 026 \oplus 027 \oplus 037 \oplus 045 \oplus 046 \oplus 047 \oplus 067 \oplus 123 \oplus 126 \oplus 135 \oplus 147 \oplus 157 \oplus 235 \oplus 236 \oplus 245 \oplus 246 \oplus 257 \oplus 346 \oplus 347 \oplus 356 \oplus 357 \oplus 367 \oplus 456 \oplus 457$
	\item[$h^{8}_{2}.$] $012 \oplus 013 \oplus 015 \oplus 016 \oplus 023 \oplus 035 \oplus 037 \oplus 046 \oplus 047 \oplus 125 \oplus 136 \oplus 145 \oplus 146 \oplus 156 \oplus 237 \oplus 245 \oplus 247 \oplus 256 \oplus 257 \oplus 267 \oplus 346 \oplus 347 \oplus 357 \oplus 467$
	\item[$h^{10}_{1}.$] $015 \oplus 017 \oplus 018 \oplus 019 \oplus 023 \oplus 026 \oplus 027 \oplus 028 \oplus 034 \oplus 038 \oplus 039 \oplus 046 \oplus 048 \oplus 049 \oplus 067 \oplus 068 \oplus 125 \oplus 126 \oplus 128 \oplus 129 \oplus 159 \oplus 168 \oplus 178 \oplus 179 \oplus 189 \oplus 236 \oplus 239 \oplus 245 \oplus 246 \oplus 247 \oplus 248 \oplus 256 \oplus 258 \oplus 259 \oplus 269 \oplus 279 \oplus 345 \oplus 346 \oplus 356 \oplus 357 \oplus 359 \oplus 367 \oplus 378 \oplus 379 \oplus 389 \oplus 457 \oplus 459 \oplus 467 \oplus 468 \oplus 479 \oplus 589 \oplus 678 \oplus 679$
	\item[$h^{10}_{2}.$] $012 \oplus 013 \oplus 014 \oplus 015 \oplus 016 \oplus 017 \oplus 018 \oplus 019 \oplus 023 \oplus 024 \oplus 025 \oplus 029 \oplus 036 \oplus 037 \oplus 038 \oplus 045 \oplus 048 \oplus 049 \oplus 056 \oplus 059 \oplus 067 \oplus 068 \oplus 078 \oplus 079 \oplus 123 \oplus 126 \oplus 127 \oplus 128 \oplus 134 \oplus 135 \oplus 139 \oplus 145 \oplus 148 \oplus 149 \oplus 156 \oplus 159 \oplus 167 \oplus 168 \oplus 178 \oplus 179 \oplus 234 \oplus 235 \oplus 236 \oplus 237 \oplus 238 \oplus 239 \oplus 245 \oplus 248 \oplus 249 \oplus 256 \oplus 259 \oplus 267 \oplus 268 \oplus 278 \oplus 279 \oplus 345 \oplus 348 \oplus 349 \oplus 356 \oplus 359 \oplus 367 \oplus 368 \oplus 378 \oplus 379 \oplus 456 \oplus 478 \oplus 489 \oplus 568 \oplus 579 \oplus 679$
	\item[$h^{10}_{3}.$] $012 \oplus 015 \oplus 017 \oplus 019 \oplus 024 \oplus 025 \oplus 028 \oplus 029 \oplus 034 \oplus 039 \oplus 046 \oplus 049 \oplus 058 \oplus 067 \oplus 078 \oplus 089 \oplus 125 \oplus 126 \oplus 128 \oplus 129 \oplus 159 \oplus 168 \oplus 178 \oplus 179 \oplus 189 \oplus 236 \oplus 239 \oplus 245 \oplus 246 \oplus 247 \oplus 248 \oplus 256 \oplus 258 \oplus 259 \oplus 269 \oplus 279 \oplus 345 \oplus 346 \oplus 356 \oplus 357 \oplus 359 \oplus 367 \oplus 378 \oplus 379 \oplus 389 \oplus 457 \oplus 459 \oplus 467 \oplus 468 \oplus 479 \oplus 589 \oplus 678 \oplus 679$
	\item[$h^{10}_{4}.$] $015 \oplus 016 \oplus 017 \oplus 019 \oplus 023 \oplus 024 \oplus 026 \oplus 028 \oplus 029 \oplus 034 \oplus 035 \oplus 037 \oplus 038 \oplus 039 \oplus 046 \oplus 056 \oplus 057 \oplus 059 \oplus 068 \oplus 069 \oplus 089 \oplus 124 \oplus 127 \oplus 128 \oplus 129 \oplus 135 \oplus 136 \oplus 137 \oplus 145 \oplus 148 \oplus 156 \oplus 158 \oplus 159 \oplus 167 \oplus 169 \oplus 178 \oplus 179 \oplus 189 \oplus 236 \oplus 238 \oplus 245 \oplus 246 \oplus 247 \oplus 249 \oplus 257 \oplus 258 \oplus 269 \oplus 278 \oplus 279 \oplus 289 \oplus 346 \oplus 348 \oplus 349 \oplus 357 \oplus 359 \oplus 367 \oplus 368 \oplus 369 \oplus 379 \oplus 389 \oplus 457 \oplus 458 \oplus 459 \oplus 468 \oplus 469 \oplus 478 \oplus 479 \oplus 489 \oplus 567 \oplus 579 \oplus 589 \oplus 679$
	\item[$h^{12}_{1}.$] $024 \oplus 027 \oplus 02\text{a} \oplus 02\text{b} \oplus 034 \oplus 038 \oplus 046 \oplus 049 \oplus 056 \oplus 05\text{a} \oplus 068 \oplus 06\text{b} \oplus 078 \oplus 08\text{a} \oplus 09\text{a} \oplus 123 \oplus 127 \oplus 135 \oplus 138 \oplus 13\text{b} \oplus 145 \oplus 149 \oplus 157 \oplus 15\text{a} \oplus 167 \oplus 16\text{b} \oplus 179 \oplus 189 \oplus 19\text{b} \oplus 1\text{a}\text{b} \oplus 235 \oplus 236 \oplus 237 \oplus 24\text{b} \oplus 25\text{b} \oplus 26\text{b} \oplus 278 \oplus 289 \oplus 29\text{a} \oplus 346 \oplus 347 \oplus 348 \oplus 389 \oplus 39\text{a} \oplus 3\text{a}\text{b} \oplus 457 \oplus 458 \oplus 459 \oplus 49\text{a} \oplus 4\text{a}\text{b} \oplus 568 \oplus 569 \oplus 56\text{a} \oplus 5\text{a}\text{b} \oplus 679 \oplus 67\text{a} \oplus 67\text{b} \oplus 78\text{a} \oplus 78\text{b} \oplus 89\text{b}$
	\item[$h^{12}_{2}.$] $024 \oplus 025 \oplus 027 \oplus 029 \oplus 02\text{a} \oplus 02\text{b} \oplus 034 \oplus 036 \oplus 038 \oplus 03\text{a} \oplus 046 \oplus 047 \oplus 049 \oplus 04\text{b} \oplus 056 \oplus 058 \oplus 05\text{a} \oplus 068 \oplus 069 \oplus 06\text{b} \oplus 078 \oplus 07\text{a} \oplus 08\text{a} \oplus 08\text{b} \oplus 09\text{a} \oplus 123 \oplus 125 \oplus 127 \oplus 129 \oplus 135 \oplus 136 \oplus 138 \oplus 13\text{a} \oplus 13\text{b} \oplus 145 \oplus 147 \oplus 149 \oplus 14\text{b} \oplus 157 \oplus 158 \oplus 15\text{a} \oplus 167 \oplus 169 \oplus 16\text{b} \oplus 179 \oplus 17\text{a} \oplus 189 \oplus 18\text{b} \oplus 19\text{b} \oplus 1\text{a}\text{b} \oplus 237 \oplus 239 \oplus 23\text{a} \oplus 245 \oplus 247 \oplus 249 \oplus 256 \oplus 257 \oplus 25\text{a} \oplus 26\text{b} \oplus 278 \oplus 279 \oplus 27\text{a} \oplus 28\text{b} \oplus 29\text{b} \oplus 348 \oplus 34\text{a} \oplus 34\text{b} \oplus 356 \oplus 358 \oplus 35\text{a} \oplus 367 \oplus 368 \oplus 36\text{b} \oplus 389 \oplus 38\text{a} \oplus 38\text{b} \oplus 459 \oplus 45\text{b} \oplus 467 \oplus 469 \oplus 46\text{b} \oplus 478 \oplus 479 \oplus 49\text{a} \oplus 49\text{b} \oplus 56\text{a} \oplus 578 \oplus 57\text{a} \oplus 589 \oplus 58\text{a} \oplus 5\text{a}\text{b} \oplus 67\text{b} \oplus 689 \oplus 68\text{b} \oplus 69\text{a} \oplus 69\text{b} \oplus 79\text{a} \oplus 7\text{a}\text{b} \oplus 8\text{a}\text{b}$
	\item[$h^{12}_{3}.$] $023 \oplus 024 \oplus 026 \oplus 027 \oplus 028 \oplus 02\text{a} \oplus 035 \oplus 038 \oplus 03\text{b} \oplus 045 \oplus 046 \oplus 048 \oplus 049 \oplus 04\text{a} \oplus 057 \oplus 05\text{a} \oplus 067 \oplus 068 \oplus 06\text{a} \oplus 06\text{b} \oplus 079 \oplus 089 \oplus 08\text{a} \oplus 09\text{b} \oplus 0\text{a}\text{b} \oplus 124 \oplus 127 \oplus 12\text{a} \oplus 12\text{b} \oplus 134 \oplus 135 \oplus 137 \oplus 138 \oplus 139 \oplus 13\text{b} \oplus 146 \oplus 149 \oplus 156 \oplus 157 \oplus 159 \oplus 15\text{a} \oplus 15\text{b} \oplus 168 \oplus 16\text{b} \oplus 178 \oplus 179 \oplus 17\text{b} \oplus 18\text{a} \oplus 19\text{a} \oplus 19\text{b} \oplus 234 \oplus 235 \oplus 237 \oplus 239 \oplus 23\text{b} \oplus 246 \oplus 24\text{a} \oplus 24\text{b} \oplus 256 \oplus 26\text{b} \oplus 278 \oplus 28\text{a} \oplus 28\text{b} \oplus 29\text{a} \oplus 2\text{a}\text{b} \oplus 345 \oplus 346 \oplus 348 \oplus 34\text{a} \oplus 357 \oplus 35\text{b} \oplus 367 \oplus 389 \oplus 39\text{b} \oplus 3\text{a}\text{b} \oplus 456 \oplus 457 \oplus 459 \oplus 45\text{b} \oplus 468 \oplus 478 \oplus 49\text{a} \oplus 567 \oplus 568 \oplus 56\text{a} \oplus 579 \oplus 589 \oplus 5\text{a}\text{b} \oplus 678 \oplus 679 \oplus 67\text{b} \oplus 68\text{a} \oplus 69\text{a} \oplus 789 \oplus 78\text{a} \oplus 79\text{b} \oplus 7\text{a}\text{b} \oplus 89\text{a} \oplus 89\text{b} \oplus 9\text{a}\text{b}$
	\item[$h^{12}_{4}.$] $023 \oplus 025 \oplus 026 \oplus 027 \oplus 028 \oplus 029 \oplus 036 \oplus 038 \oplus 03\text{a} \oplus 045 \oplus 047 \oplus 048 \oplus 049 \oplus 04\text{a} \oplus 04\text{b} \oplus 058 \oplus 05\text{a} \oplus 067 \oplus 069 \oplus 06\text{a} \oplus 06\text{b} \oplus 07\text{a} \oplus 089 \oplus 08\text{b} \oplus 0\text{a}\text{b} \oplus 125 \oplus 127 \oplus 129 \oplus 12\text{b} \oplus 134 \oplus 136 \oplus 137 \oplus 138 \oplus 139 \oplus 13\text{a} \oplus 147 \oplus 149 \oplus 14\text{b} \oplus 156 \oplus 158 \oplus 159 \oplus 15\text{a} \oplus 15\text{b} \oplus 169 \oplus 16\text{b} \oplus 178 \oplus 17\text{a} \oplus 17\text{b} \oplus 18\text{b} \oplus 19\text{a} \oplus 234 \oplus 235 \oplus 237 \oplus 239 \oplus 23\text{b} \oplus 246 \oplus 24\text{a} \oplus 24\text{b} \oplus 256 \oplus 26\text{b} \oplus 278 \oplus 28\text{a} \oplus 28\text{b} \oplus 29\text{a} \oplus 2\text{a}\text{b} \oplus 345 \oplus 346 \oplus 348 \oplus 34\text{a} \oplus 357 \oplus 35\text{b} \oplus 367 \oplus 389 \oplus 39\text{b} \oplus 3\text{a}\text{b} \oplus 456 \oplus 457 \oplus 459 \oplus 45\text{b} \oplus 468 \oplus 478 \oplus 49\text{a} \oplus 567 \oplus 568 \oplus 56\text{a} \oplus 579 \oplus 589 \oplus 5\text{a}\text{b} \oplus 678 \oplus 679 \oplus 67\text{b} \oplus 68\text{a} \oplus 69\text{a} \oplus 789 \oplus 78\text{a} \oplus 79\text{b} \oplus 7\text{a}\text{b} \oplus 89\text{a} \oplus 89\text{b} \oplus 9\text{a}\text{b}$
	\item[$h^{12}_{5}.$] $024 \oplus 025 \oplus 027 \oplus 02\text{a} \oplus 038 \oplus 03\text{a} \oplus 046 \oplus 047 \oplus 049 \oplus 05\text{a} \oplus 068 \oplus 069 \oplus 06\text{b} \oplus 08\text{a} \oplus 08\text{b} \oplus 127 \oplus 129 \oplus 135 \oplus 136 \oplus 138 \oplus 13\text{b} \oplus 149 \oplus 14\text{b} \oplus 157 \oplus 158 \oplus 15\text{a} \oplus 16\text{b} \oplus 179 \oplus 17\text{a} \oplus 19\text{b} \oplus 234 \oplus 235 \oplus 239 \oplus 23\text{a} \oplus 23\text{b} \oplus 245 \oplus 247 \oplus 249 \oplus 24\text{b} \oplus 256 \oplus 257 \oplus 25\text{a} \oplus 279 \oplus 27\text{a} \oplus 28\text{b} \oplus 29\text{a} \oplus 29\text{b} \oplus 2\text{a}\text{b} \oplus 345 \oplus 346 \oplus 34\text{a} \oplus 34\text{b} \oplus 356 \oplus 358 \oplus 35\text{a} \oplus 367 \oplus 368 \oplus 36\text{b} \oplus 38\text{a} \oplus 38\text{b} \oplus 3\text{a}\text{b} \oplus 456 \oplus 457 \oplus 45\text{b} \oplus 467 \oplus 469 \oplus 46\text{b} \oplus 478 \oplus 479 \oplus 49\text{b} \oplus 567 \oplus 568 \oplus 578 \oplus 57\text{a} \oplus 589 \oplus 58\text{a} \oplus 678 \oplus 679 \oplus 689 \oplus 68\text{b} \oplus 69\text{a} \oplus 69\text{b} \oplus 789 \oplus 78\text{a} \oplus 79\text{a} \oplus 7\text{a}\text{b} \oplus 89\text{a} \oplus 89\text{b} \oplus 8\text{a}\text{b} \oplus 9\text{a}\text{b}$
	\item[$h^{12}_{6}.$] $027 \oplus 029 \oplus 02\text{a} \oplus 02\text{b} \oplus 037 \oplus 038 \oplus 03\text{a} \oplus 03\text{b} \oplus 047 \oplus 048 \oplus 049 \oplus 04\text{b} \oplus 057 \oplus 058 \oplus 059 \oplus 05\text{a} \oplus 068 \oplus 069 \oplus 06\text{a} \oplus 06\text{b} \oplus 078 \oplus 07\text{b} \oplus 089 \oplus 09\text{a} \oplus 0\text{a}\text{b} \oplus 123 \oplus 126 \oplus 127 \oplus 128 \oplus 129 \oplus 12\text{a} \oplus 134 \oplus 138 \oplus 139 \oplus 13\text{a} \oplus 13\text{b} \oplus 145 \oplus 147 \oplus 149 \oplus 14\text{a} \oplus 14\text{b} \oplus 156 \oplus 157 \oplus 158 \oplus 15\text{a} \oplus 15\text{b} \oplus 167 \oplus 168 \oplus 169 \oplus 16\text{b} \oplus 237 \oplus 23\text{a} \oplus 23\text{b} \oplus 248 \oplus 24\text{a} \oplus 258 \oplus 25\text{b} \oplus 269 \oplus 26\text{a} \oplus 26\text{b} \oplus 278 \oplus 289 \oplus 28\text{b} \oplus 29\text{a} \oplus 29\text{b} \oplus 347 \oplus 348 \oplus 34\text{b} \oplus 359 \oplus 35\text{b} \oplus 367 \oplus 369 \oplus 379 \oplus 37\text{a} \oplus 389 \oplus 39\text{a} \oplus 3\text{a}\text{b} \oplus 456 \oplus 457 \oplus 458 \oplus 459 \oplus 45\text{b} \oplus 467 \oplus 47\text{b} \oplus 48\text{a} \oplus 48\text{b} \oplus 49\text{a} \oplus 568 \oplus 56\text{a} \oplus 578 \oplus 579 \oplus 57\text{b} \oplus 5\text{a}\text{b} \oplus 678 \oplus 67\text{a} \oplus 67\text{b} \oplus 689 \oplus 68\text{a} \oplus 69\text{a} \oplus 9\text{a}\text{b}$
	\item[$h^{12}_{7}.$] $027 \oplus 029 \oplus 02\text{a} \oplus 02\text{b} \oplus 037 \oplus 038 \oplus 03\text{a} \oplus 03\text{b} \oplus 047 \oplus 048 \oplus 049 \oplus 04\text{b} \oplus 057 \oplus 058 \oplus 059 \oplus 05\text{a} \oplus 068 \oplus 069 \oplus 06\text{a} \oplus 06\text{b} \oplus 078 \oplus 07\text{b} \oplus 089 \oplus 09\text{a} \oplus 0\text{a}\text{b} \oplus 123 \oplus 126 \oplus 127 \oplus 128 \oplus 129 \oplus 12\text{a} \oplus 134 \oplus 138 \oplus 139 \oplus 13\text{a} \oplus 13\text{b} \oplus 145 \oplus 147 \oplus 149 \oplus 14\text{a} \oplus 14\text{b} \oplus 156 \oplus 157 \oplus 158 \oplus 15\text{a} \oplus 15\text{b} \oplus 167 \oplus 168 \oplus 169 \oplus 16\text{b} \oplus 237 \oplus 23\text{a} \oplus 23\text{b} \oplus 248 \oplus 24\text{a} \oplus 258 \oplus 25\text{b} \oplus 269 \oplus 26\text{a} \oplus 26\text{b} \oplus 278 \oplus 289 \oplus 28\text{b} \oplus 29\text{a} \oplus 29\text{b} \oplus 347 \oplus 348 \oplus 34\text{b} \oplus 356 \oplus 359 \oplus 367 \oplus 369 \oplus 36\text{a} \oplus 379 \oplus 37\text{a} \oplus 389 \oplus 39\text{a} \oplus 457 \oplus 458 \oplus 459 \oplus 467 \oplus 46\text{a} \oplus 47\text{b} \oplus 48\text{a} \oplus 48\text{b} \oplus 49\text{a} \oplus 4\text{a}\text{b} \oplus 569 \oplus 56\text{a} \oplus 578 \oplus 579 \oplus 57\text{b} \oplus 58\text{b} \oplus 59\text{b} \oplus 5\text{a}\text{b} \oplus 678 \oplus 67\text{a} \oplus 67\text{b} \oplus 689 \oplus 8\text{a}\text{b}$
\end{itemize}

\begin{table}[H]
	\centering
	\caption{The known homogeneous cubic bent functions in a small number of variables and their invariants. Functions $h^6_1$ and $h^8_1,h^8_2$ describe up to equivalence all homogeneous functions in 6 and 8 variables, respectively. Functions $h^{10}_3$ and $h^{10}_1$ are the first and the second 10-variable functions from~\cite[p. 15]{MengNovel2004}. Functions $h^{10}_2$ and $h^{10}_4$ are representatives of equivalence classes of functions, constructed in~\cite[p. 149]{Charnes2002}. Functions $h^{12}_i$ for $1\le i\le5$ are representatives of equivalence classes of functions, constructed in~\cite[p. 149]{Charnes2002}. Functions $h^{12}_6$ and $h^{12}_7$ were constructed in Subsection~\ref{subsection: New from old}.} 
	\label{table: Functions from the paper}
	\begin{subtable}[t]{.5\linewidth}
		\centering
		\scalebox{1}{
			\begin{tabular}{|c|c|c|c|}		
				\hline
				$h^{n}_{i}$     & $\ind(h^{n}_{i})$ & $\rind(h^{n}_{i})$ & $\dim(\mathbb{FP}_{h^{n}_{i}})$   \\ \hline 
				$h^{6}_{1}$     & 3 & 4 & 3  \\ \hline
				$h^{8}_{1}$  &   4    &    \textbf{4}    & 1 \\ \hline
				$h^{8}_{2}$  &  4     &     5   & 2 \\ \hline				
				$h^{10}_{1}$  &   5    &   5     & 1 \\ \hline                           
				$h^{10}_{2}$  &   5   &   5     & 1 \\ \hline
				$h^{10}_{3}$  &   \textbf{4}    &     \textbf{4}  & 1 \\ \hline
				$h^{10}_{4}$  &   \textbf{2}    &    \textbf{4}    & \textbf{0} \\ \hline
		\end{tabular}}
	\end{subtable}%
	\begin{subtable}[t]{.5\linewidth}
		\centering
		\scalebox{1}{
			\begin{tabular}{|c|c|c|c|}		
				\hline
				$h^{n}_{i}$     & $\ind(h^{n}_{i})$ & $\rind(h^{n}_{i})$ & $\dim(\mathbb{FP}_{h^{n}_{i}})$   \\ \hline	
				$h^{12}_{1}$  & 6      &     \textbf{6}   & 2  \\ \hline                           
				$h^{12}_{2}$  & 6      &     \textbf{6}   & 2 \\ \hline
				$h^{12}_{3}$  & 6      &     7   & 1  \\ \hline
				$h^{12}_{4}$  & 6      &     7   & 2  \\ \hline
				$h^{12}_{5}$  & 6      &     \textbf{6}   & \textbf{0}  \\ \hline
				$h^{12}_{6}$  & 6      &     $\ge$7   & 1  \\ \hline
				$h^{12}_{7}$  & 6      &    $\ge$7   & 1  \\ \hline
		\end{tabular}}
	\end{subtable}%
\end{table}

For each homogeneous cubic bent function $h^{n}_i\in\MClass$ on $\F_2^n$ we list the collection $\mathcal{M}_{n/2}(h^{n}_i)$ as a $|\mathcal{M}_{n/2}(h^{n}_i)| \times n/2$ matrix in the following way. Each row of  $\mathcal{M}_{n/2}(h^{n}_i)$ describes the Gauss-Jordan basis of an $\mathcal{M}$-subspace of $h^{n}_i$. Each element of a basis is given by 32-base number, which can be converted to the binary vector of length $n$, using the alphabet~\eqref{equation: alphabet}. For instance, using this conversion one can check, that the first row of the matrix $\mathcal{MS}_6(h^{12}_3)$ describes the $\gjb(U)$ of the $\mathcal{M}$-subspace $U$, given in~\eqref{equation: MSubspace from the Observation}.

\begin{itemize}
	\item $\mathcal{MS}_5(h^{10}_1)=\scalebox{0.95}{$\left(
		\begin{array}{ccccc}
			\text{o2} & \text{4l} & \text{2m} & \text{1j} & \text{f} \\
		\end{array}
		\right)$}$, $\mathcal{MS}_5(h^{10}_2)=\scalebox{0.95}{$\left(
		\begin{array}{ccccc}
			\text{o0} & 60 & 12 & \text{o} & 5 \\
		\end{array}
		\right)$}$;\ \\[0.5mm]
	\item $\mathcal{MS}_6(h^{12}_1)=\scalebox{0.95}{$\left(
		\begin{array}{cccccc}
			\text{22r} & \text{10m} & \text{it} & \text{8e} & 66 & 17 \\
			\text{20q} & \text{12o} & \text{in} & \text{af} & \text{4s} & \text{1p} \\
			\text{21c} & \text{10d} & \text{gs} & \text{9n} & \text{5r} & \text{2e} \\
			\text{20b} & \text{11u} & \text{gj} & \text{9t} & 47 & 33 \\
			\text{20v} & \text{11k} & \text{hh} & \text{9o} & \text{5f} & \text{3i}
		\end{array}\right)$}, \mathcal{MS}_6(h^{12}_3)=\scalebox{0.95}{$\left(
		\begin{array}{cccccc}
			300 & \text{gg} & 88 & 44 & 22 & 11 \\
			\text{21u} & \text{10v} & \text{hh} & 99 & 55 & 33 \\
			\text{20v} & \text{11u} & \text{hh} & 99 & 55 & 33
		\end{array}
		\right)$},$\ \\[0.5mm]
	\item[] $\mathcal{MS}_6(h^{12}_5)=\scalebox{0.95}{$\left(
		\begin{array}{cccccc} 300 & \text{gg} & 88 & 44 & 22 & 11\end{array}
		\right)$}$, $\mathcal{MS}_6(h^{12}_1)=\mathcal{MS}_6(h^{12}_2)$, $\mathcal{MS}_6(h^{12}_3)=$
	
	$\mathcal{MS}_6(h^{12}_4)=\mathcal{MS}_6(h^{12}_6)=\mathcal{MS}_6(h^{12}_7)$.
\end{itemize} 
\end{document}